\newtheorem{thm}{Theorem}[section]
\newtheorem{cor}[thm]{Corollary}
\newtheorem{lem}[thm]{Lemma}
\newtheorem{ass}[thm]{Assumption}
\theoremstyle{definition}
\newtheorem{rem}[thm]{Remark}
\newtheorem{dfn}[thm]{Definition}
\newtheorem*{claim*}{Claim}
\numberwithin{equation}{section}
\title{\vspace{-3cm}\textbf{The factorization and monotonicity method for the defect in an open periodic waveguide}}
\author{Takashi FURUYA}
\date{}
\begin{document}
\maketitle
\begin{abstract}
In this paper, we consider the inverse scattering problem to reconstruct the defect in an open periodic waveguide from near field data. Our first aim is to mention that there is a mistake in the factorization method of Lechleiter \cite{Lechleiter1}. By this we can not apply it to solve this inverse problem. Our second aim is to give ways to understand the defect from inside (Theorem 1.1) and outside (Theorem 1.2) based on the monotonicity method. Finally, we give numerical examples based on Theorem 1.1.
\end{abstract}
\section{Introduction}
In this paper,  we consider the inverse scattering problem to reconstruct the defect in an open periodic waveguide from near field data. The contributions of this paper are followings.
\begin{itemize}
  \item We mention that there is a mistake in ($F_{\#}$--) factorization method of Lechleiter \cite{Lechleiter1} by giving a counterexample, and revise the proof of the factorization method.
  \item We give the reconstruction method based on the monotonicity method. The unknown defect are understood from inside (Theorem 1.1) and outside (Theorem 1.2)
\end{itemize}
By the mistake we will understand that {\it transmission eigenvale} is needed in the factorization method for inverse medium scattering problem. Furthermore, we can not apply it to inverse problem of an open periodic waveguide we will consider here. In order to solve the inverse problem of our case, we consider the monotonicity method. The feature of this method is to understand the inclusion relation of an unknown defect and an artificial domain by comparing the data operator with some operator corresponding to an artificial one. For recent works of the monotonicity method, we refer to \cite{Furuya2, R. Griesmaier and B. Harrach, B. Harrach and V. Pohjola and M. Salo1, B. Harrach and V. Pohjola and M. Salo2, B. Harrach and M. Ullrich1, B. Harrach and M. Ullrich2, Lakshtanov and Lechleiter}. 
\par
We begin with formulation of the scattering problem. Let $k>0$ be the wave number, and let $\mathbb{R}^2_{+}:=\mathbb{R}\times (0, \infty)$ be the upper half plane, and let $W:=\mathbb{R}\times (0, h)$ be the waveguide in $\mathbb{R}^2_{+}$. We denote by $\Gamma_a:=\mathbb{R}\times\{ a\}$ for $a>0$. Let $n \in L^{\infty}(\mathbb{R}^2_{+})$ be real value, $2\pi$-periodic with respect to $x_1$ (that is, $n(x_1+2\pi,x_2)=n(x_1,x_2 )$ for all $x=(x_1,x_2) \in \mathbb{R}^2_{+}$), and equal to one for $x_2>h$. We assume that there exists a constant $n_{max}>0$ and $n_{min}>0$ such that $n_{min} \leq n \leq n_{max}$ in $\mathbb{R}^2_{+}$. Let $q \in L^{\infty}(\mathbb{R}^2_{+})$ be real value with the compact support in $W$. We denote by $Q:=\mathrm{supp}q$. Assume that $\mathbb{R}^2 \setminus \overline{Q}$ is connected. First of all we consider the following direct scattering problem: For fixed $y \in \mathbb{R}^2_{+} \setminus \overline{W}$, determine the scattered field $u^{s} \in H^{1}_{loc}(\mathbb{R}^2_{+})$ such that
\begin{equation}
\Delta u^{s}+k^2(1+q)nu^{s}=-k^{2}qnu^{i}(\cdot, y) \ \mathrm{in} \ \mathbb{R}^2_{+}, \label{1.1}
\end{equation}
\begin{equation}
u^{s}=0 \ \mathrm{on} \ \Gamma_0, \label{1.2}
\end{equation}
where the incident field $u^{i}$ is given by $u^{i}(x,y)=\overline{G_n(x,y)}$, where $G_n$ is the Dirichlet Green's function in the upper half plane $\mathbb{R}^2_{+}$ for $\Delta +k^2n$, that is,
\begin{equation}
G_n(x,y):=G(x,y)+\tilde{u}^{s}(x,y), \label{1.3}
\end{equation}
where $G(x,y):=\Phi_k(x,y)-\Phi_k(x,y^{*})$ is the Dirichlet Green's function for $\Delta +k^2$, and $y^{*}=(y_1, -y_2)$ is the reflected point of $y$ at $\mathbb{R}\times \{0\}$. Here, $\Phi_k(x,y)$ is the fundamental solution to Helmholtz equation in $\mathbb{R}^2$, that is, 
\begin{equation}
\Phi_k(x,y):= \displaystyle \frac{i}{4}H^{(1)}_0(k|x-y|), \ x \neq y. \label{1.4}
\end{equation}
$\tilde{u}^{s}$ is the scattered field of the unperturbed problem by the incident field $G(x,y)$, that is, $\tilde{u}^{s}$ vanishes for $x_2=0$ and solves
\begin{equation}
\Delta \tilde{u}^{s}+k^2n\tilde{u}^{s}=k^{2}(1-n)G(\cdot, y) \ \mathrm{in} \ \mathbb{R}^2_{+}. \label{1.5}
\end{equation}
If we impose a suitable radiation condition introduced by Kirsch and Lechleiter \cite{Kirsch and Lechleiter2}, the unperturbed solution $\tilde{u}^{s}$ is uniquely determined. Later, we will explain the exact definition of this radiation condition (see Definition 2.4). Furthermore, with the same radiation condition and an additional assumption (see Assumption 2.7) the well-posedness of the problem (\ref{1.1})--(\ref{1.2}) was show in \cite{Furuya1}.
\par
By the well-posedness of this perturbed scattering problem, we are able to consider the inverse problem of determining the support of $q$ from measured scattered field $u^s$ by the incident field $u^{i}$. Let $M:=\{(x_1, m): a<x_1<b \}$
for $a<b$ and $m>h$, and $Q:=\mathrm{supp}q$. With the scattered field $u^{s}$, we define the {\it near field operator} $N:L^{2}(M)\to L^{2}(M)$ by
\begin{equation}
Ng(x):=\int_{M}u^{s}(x,y)g(y)ds(y), \ x \in M. \label{1.6}
\end{equation}
The inverse problem we consider in this paper is to determine support $Q$ of $q$ from the scattered field $u^{s}(x,y)$ for all $x$ and $y$ in $M$ with one $k>0$. In other words, given the near field operator $N$, determine $Q$.
\par
Our aim in this paper is to revise the factorization method (see section 3) and to provide the following two theorems.
\begin{thm}
Let $B \subset \mathbb{R}^2$ be a bounded open set. Let Assumption hold, and assume that there exists $q_{min}>0$ such that $q\geq q_{min}$ a.e. in $Q$. Then for $0<\alpha<k^2n_{min}q_{min}$, 
\begin{equation}
B \subset Q \ \ \ \  \Longleftrightarrow \ \ \ \  \alpha H^{*}_{B}H_{B}\leq_{\mathrm{fin}} \mathrm{Re}N,\label{1.7}
\end{equation}
where the operator $H_{B}:L^{2}(M) \to L^{2}(B)$ is given by
\begin{equation}
H_{B}g(x):=\int_{M}\overline{G_n(x,y)}g(y)ds(y), \ x \in B, \label{1.8}
\end{equation}
and the inequality on the right hand side in (\ref{1.7}) denotes that $\mathrm{Re}N-\alpha H^{*}_{B}H_{B}$ has only finitely many negative eigenvalues, and the real part of an operator $A$ is self-adjoint operators given by $\mathrm{Re}(A):=\displaystyle \frac{1}{2}(A+A^{*})$.
\end{thm}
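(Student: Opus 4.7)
The plan is to follow the monotonicity blueprint of Harrach--Ullrich, adapted to the open periodic waveguide: produce a symmetric factorization of $N$, extract a coercive-plus-compact structure for the middle operator, use subset monotonicity of the Herglotz-type operators for the ``$\Rightarrow$'' direction, and use a localized-potentials construction for the ``$\Leftarrow$'' direction.

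\emph{Step 1 (Factorization).} I would first write $N = H_Q^* T H_Q$, where $H_Q : L^2(M) \to L^2(Q)$ is defined exactly as $H_B$ but with $B$ replaced by $Q$, and $T : L^2(Q) \to L^2(Q)$ encodes the forward solution map: $T f = k^2 q n (f + v_f)\big|_Q$, with $v_f \in H^1_{\mathrm{loc}}(\mathbb{R}^2_+)$ the radiating solution of $\Delta v + k^2(1+q) n v = -k^2 q n f$, $v = 0$ on $\Gamma_0$. This is the standard Kirsch-type derivation from the integral representation of $u^s$ using $\overline{G_n}$ as kernel. Taking real parts gives $\mathrm{Re}\, N = H_Q^*(\mathrm{Re}\, T) H_Q$.

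\emph{Step 2 (Structure of $\mathrm{Re}\,T$).} Splitting $T f = k^2 q n f + k^2 q n v_f\big|_Q$, the first summand is multiplication by a function bounded below a.e.\ by $k^2 n_{\min} q_{\min}$, and the second is compact on $L^2(Q)$ because $f \mapsto v_f\big|_Q$ factors through a compact Sobolev embedding. Hence $\mathrm{Re}\, T = k^2 q n I + K_T$ with $K_T$ compact self-adjoint.

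\emph{Step 3 ($\Rightarrow$).} Assuming $B \subset Q$, the restriction identity $H_B g = (H_Q g)\big|_B$ gives $H_B^* H_B \leq H_Q^* H_Q$ as quadratic forms. Combining with Steps 1--2,
\begin{equation*}
\mathrm{Re}\, N - \alpha H_B^* H_B \;\geq\; (k^2 n_{\min} q_{\min} - \alpha)\, H_Q^* H_Q \;+\; H_Q^* K_T H_Q,
\end{equation*}
where the first term is nonnegative since $\alpha < k^2 n_{\min} q_{\min}$, and the second is compact. By Weyl's theorem on essential spectrum, $\mathrm{Re}\, N - \alpha H_B^* H_B$ then has nonnegative essential spectrum and thus only finitely many negative eigenvalues, which is exactly $\alpha H_B^* H_B \leq_{\mathrm{fin}} \mathrm{Re}\, N$.

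\emph{Step 4 ($\Leftarrow$).} Contrapositively, if $B \not\subset Q$, pick $x_0 \in B \setminus \overline{Q}$ and a small open ball $U \subset B \setminus \overline{Q}$ around $x_0$. The key input is a \emph{localized-potentials} lemma: there is a sequence $g_n \in L^2(M)$ with $\|H_U g_n\|_{L^2(U)} \to \infty$ and $\|H_Q g_n\|_{L^2(Q)} \to 0$. Granting this, $\langle \mathrm{Re}\, N g_n, g_n \rangle = \langle (\mathrm{Re}\, T) H_Q g_n, H_Q g_n \rangle \to 0$, while $\alpha \|H_B g_n\|_{L^2(B)}^2 \geq \alpha \|H_U g_n\|_{L^2(U)}^2 \to \infty$. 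Applying the same argument on countably many pairwise disjoint sub-balls of $U$ yields an infinite-dimensional subspace on which $\mathrm{Re}\, N - \alpha H_B^* H_B$ is strictly negative, ruling out $\leq_{\mathrm{fin}}$.

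The main obstacle is Step~4: the localized-potentials statement for the kernel $\overline{G_n(x,y)}$ in the open periodic waveguide. It reduces to showing that, for any bounded test domain $A$ with $A \cap Q = \emptyset$, the range of $H_A$ is dense in a suitable $L^2$ sense, which is usually derived from Rellich's lemma together with unique continuation for solutions of $\Delta u + k^2 n u = 0$. The analog of Rellich's lemma in the periodic waveguide relies on the exact form of the radiation condition from Definition~2.4 and on the well-posedness encoded in Assumption~2.7 (to exclude guided modes that would break injectivity of $H_A^*$). Once that analytic input is in place, the rest of the argument is formal and follows the outline above.
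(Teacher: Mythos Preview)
Your outline follows the same strategy as the paper (the factorization $N=H_Q^{*}\hat T H_Q$, the coercive-plus-compact splitting of $\mathrm{Re}\,\hat T$, and localized potentials for the converse), but Steps~3 and~4 both contain genuine gaps.

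In Step~3, the implication ``nonnegative essential spectrum $\Rightarrow$ finitely many negative eigenvalues'' is false. All the operators in sight are compact (both $N$ and $H_B^{*}H_B$ are), so the essential spectrum of $\mathrm{Re}\,N-\alpha H_B^{*}H_B$ is $\{0\}$ regardless, and a compact self-adjoint operator can perfectly well have infinitely many negative eigenvalues accumulating at $0$. Your lower bound $(k^2 n_{\min} q_{\min}-\alpha)H_Q^{*}H_Q + H_Q^{*}K_T H_Q$ is of the form ``nonnegative compact $+$ compact'', which does not help. The correct argument (and the paper's) works on $L^2(Q)$ rather than $L^2(M)$: since $K_T$ is compact and self-adjoint on $L^2(Q)$, the sum $V$ of its eigenspaces for eigenvalues below $\alpha-k^2 n_{\min}q_{\min}<0$ is finite-dimensional; for $H_Qg\in V^{\perp}$ one gets $\langle(\mathrm{Re}\,N-\alpha H_B^{*}H_B)g,g\rangle\ge 0$, and $\{g:H_Qg\in V^{\perp}\}=(H_Q^{*}V)^{\perp}$ has finite codimension in $L^2(M)$.

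In Step~4, the passage from a single localized-potentials sequence to ``an infinite-dimensional subspace on which the form is strictly negative'' via countably many disjoint sub-balls does not work: the vectors $g_j\in L^2(M)$ you obtain need not be linearly independent, and even if they are, the quadratic form need not be negative on their span. The paper instead argues by contradiction against the finite-codimension characterization of $\le_{\mathrm{fin}}$: if $\alpha H_B^{*}H_B\le_{\mathrm{fin}}\mathrm{Re}\,N$, there is a finite-dimensional $W\subset L^2(M)$ with $\mathrm{Re}\,N-\alpha H_B^{*}H_B\ge 0$ on $W^{\perp}$. Taking $B_0\subset B\setminus\overline Q$, one has $\mathrm{Ran}(H_{B_0}^{*})\cap\mathrm{Ran}(H_Q^{*})=\{0\}$ and $\dim\mathrm{Ran}(H_{B_0}^{*})=\infty$, hence $\mathrm{Ran}(H_{B_0}^{*})\not\subset\mathrm{Ran}(H_Q^{*})+W$; a range-versus-norm lemma then yields $w_m$ with $\|H_{B_0}w_m\|\to\infty$, $\|H_Qw_m\|\to 0$ and $\|P_Ww_m\|\to 0$, so $\tilde w_m:=w_m-P_Ww_m\in W^{\perp}$ violates the inequality on $W^{\perp}$. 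Your identification of the analytic ingredients (range disjointness via unique continuation, injectivity of $H_A$) is correct; what is missing is this ``modulo a finite-dimensional $W$'' refinement.
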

\begin{thm}
Let $B \subset \mathbb{R}^2$ be a bounded open set. Let Assumption hold, and assume that there exists $q_{min}>0$ and $q_{max}>0$ such that $q_{min}\leq q \leq q_{max}$ a.e. in $Q$. Then for $\alpha>k^2n_{max}q_{max}$,
\begin{equation}
Q \subset B \ \ \ \  \Longleftrightarrow \ \ \ \ \mathrm{Re}N \leq_{\mathrm{fin}} \alpha H^{*}_{B}H_{B}, \label{1.9}
\end{equation}
\end{thm}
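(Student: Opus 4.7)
The plan is to adapt the monotonicity recipe of Harrach--Ullrich in the same form already used for Theorem 1.1. The three ingredients I need are a factorization of the near field operator through $L^2(Q)$, an \emph{upper} bound on the real part of the middle operator (modulo compact), and a localized-potentials argument that turns a failure of $Q\subset B$ into an infinite-dimensional subspace on which $\mathrm{Re}N - \alpha H_B^*H_B$ is positive.

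First I would reuse the factorization $N = H_Q^* T H_Q$ derived in the proof of Theorem 1.1, where $H_Q : L^2(M) \to L^2(Q)$ is the restriction of $H_B$ with $B$ replaced by $Q$, and $T : L^2(Q) \to L^2(Q)$ is the natural middle operator associated with the Lippmann--Schwinger reformulation of \eqref{1.1}--\eqref{1.2}. The same calculation that gave the lower bound on $\mathrm{Re}(T)$ for Theorem 1.1 also gives the pointwise upper bound
\[
\mathrm{Re}(T) \leq k^2 n_{\max} q_{\max}\,\mathrm{Id} + K_T,
\]
with $K_T$ compact and self-adjoint on $L^2(Q)$; the compactness comes, as usual, from the volume-potential term and the gain of regularity of the solution operator to the perturbed problem.

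For the implication ``$Q \subset B \Longrightarrow \mathrm{Re}N \leq_{\mathrm{fin}} \alpha H_B^*H_B$'' the upper bound on $\mathrm{Re}(T)$ combined with the factorization yields, for every $g\in L^2(M)$,
\[
\langle \mathrm{Re}N\,g,g\rangle \leq k^2 n_{\max} q_{\max}\,\|H_Q g\|_{L^2(Q)}^2 + \langle K_T H_Q g,\,H_Q g\rangle.
\]
Since $Q\subset B$, the pointwise inequality $\|H_Q g\|_{L^2(Q)}^2 \leq \|H_B g\|_{L^2(B)}^2$ together with $\alpha > k^2 n_{\max} q_{\max}$ shows that $\alpha H_B^*H_B - \mathrm{Re}N$ is a non-negative operator plus $-H_Q^* K_T H_Q$, and the latter is compact and self-adjoint on $L^2(M)$. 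The standard min--max criterion (the same lemma from Harrach--Ullrich invoked for Theorem 1.1) then forces $\alpha H_B^*H_B - \mathrm{Re}N$ to have at most finitely many negative eigenvalues.

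For the converse I would argue by contraposition. If $Q\not\subset B$, I pick a ball $B_\varepsilon(z_0)\subset Q\setminus\overline B$ and, using the factorization together with the lower bound $\mathrm{Re}(T)\geq k^2 n_{\min}q_{\min}\,\mathrm{Id}+\widetilde K_T$ already proved for Theorem 1.1, the task reduces to producing a sequence $g_j \in L^2(M)$ spanning an infinite-dimensional subspace with $\|H_Q g_j\|_{L^2(B_\varepsilon(z_0))}\to\infty$ while $\|H_B g_j\|_{L^2(B)}$ stays bounded. Such \emph{localized potentials} exist provided one has the density/unique-continuation property of the data-to-pattern map $g\mapsto \int_M\overline{G_n(\cdot,y)}g(y)\,ds(y)$ for the open waveguide, which is the same property used in Theorem 1.1. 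The compact remainders are absorbed by passing to a codimension-finite subspace, using that each compact self-adjoint operator has only finitely many eigenvalues above any fixed threshold.

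The main obstacle I expect is the localized-potentials step: in the open periodic waveguide $G_n$ mixes guided and radiating modes, so there is no clean Herglotz-wave machinery available as in free space. Establishing $L^2$-density of the unrestricted data map in local $L^2$-spaces, together with the ability to keep the $L^2(B)$-norm bounded while blowing up the $L^2(B_\varepsilon(z_0))$-norm, requires careful use of the mapping properties of $H_B$ and unique continuation for $\Delta + k^2 n$ across $\partial B$. Once this density is in place the spectral bookkeeping is identical to that in the proof of Theorem 1.1.
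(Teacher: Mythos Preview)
Your proposal is correct and follows essentially the same route as the paper's proof in Section~6: the forward direction uses the factorization $N=H_Q^*\hat T H_Q$ together with the upper bound $\mathrm{Re}\,\hat T\le k^2n_{\max}q_{\max}\,\mathrm{Id}+K$ (compact $K$) and $\|H_Qg\|\le\|H_Bg\|$ when $Q\subset B$, while the converse combines the \emph{lower} bound from Theorem~1.1 with the localized-potentials/range argument (Lemma~4.5 of the paper plus Lemmas~4.6--4.7 and Corollary~3.3 of \cite{B. Harrach and V. Pohjola and M. Salo2}) applied to $Q_0\subset Q\setminus\overline B$. The obstacle you anticipate in the localized-potentials step is exactly what Lemma~4.5 handles via unique continuation for $\Delta+k^2n$ and the jump relation, so no new machinery is needed beyond what was already set up for Theorem~1.1.
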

We understand whether an artificial domain $B$ is contained in $Q$ or not in Theorem 1.1, and $B$ contain $Q$ in Theorem 1.2, respectively. Then, by preparing a lot of known domain $B$ and for each $B$ checking (\ref{1.7}) or (\ref{1.9}) we can reconstruct the shape and location of unknown $Q$.
\par
This paper is organized as follows. In Section 2, we recall a radiation condition introduced in \cite{Kirsch and Lechleiter2}, and the well-posedness of the problem (\ref{1.1})--(\ref{1.2}). In Section 3, we mention the exact functional analytic theorem in the factorization method (Theorem 2.15 in \cite{Kirsch and Grinberg}), and mention where there is a mistake in one of Lechleiter (Theorem 2.1 in \cite{Lechleiter1}) by giving an counterexample. In Section 4, we consider several factorization of the near field operator $N$, and we mention that there is a difficulty to apply the factorization method due to the mistake of Lechleiter. However, the properties of its factorization discussed in Section 4 will be useful when we show Theorems 1.1 and 1.2. In Sections 5 and 6, we prove Theorems 1.1 and 1.2, respectively. Finally in Section 7, numerical examples based on Theorem 1.1 are given.
\section{A radiation condition}
In Section 2, we recall a radiation condition introduced in \cite{Kirsch and Lechleiter2}. Let $f \in L^{2}(\mathbb{R}^2_{+})$ have the compact support in $W$.  First, we consider the following direct problem: Determine the scattered field $u \in H^{1}_{loc}(\mathbb{R}^2_{+})$ such that
\begin{equation}
\Delta u+k^2nu=f \ \mathrm{in} \ \mathbb{R}^2_{+}, \label{2.1}
\end{equation}
\begin{equation}
u=0 \ \mathrm{on} \ \Gamma_0. \label{2.2}
\end{equation}
(\ref{2.1}) is understood in the variational sense, that is,
\begin{equation}
\int_{\mathbb{R}^2_{+}} \bigl[ \nabla u \cdot \nabla \overline{\varphi}-k^2nu\overline{\varphi} \bigr]dx=-\int_W f \overline{\varphi}dx, \label{2.3}
\end{equation}
for all $\varphi \in H^{1}(\mathbb{R}^2_{+})$, with compact support. In such a problem, it is natural to impose the {\it upward propagating radiation condition}, that is, $u(\cdot, h) \in L^{\infty}(\mathbb{R})$ and 
\begin{equation}
u(x)=2\int_{\Gamma_h}u(y)\frac{\partial\Phi_k(x,y)}{\partial y_2} ds(y)=0,\ x_2>h. \label{2.4}
\end{equation}
However, even with this condition we can not expect the uniqueness of this problem. (see Example 2.3 of \cite{Kirsch and Lechleiter2}.) In order to introduce a {\it suitable radiation condition}, Kirsch and Lechleiter discussed limiting absorption solution of this problem, that is, the limit of the solution $u_{\epsilon}$ of $\Delta u_{\epsilon}+(k+i\epsilon)^2nu_{\epsilon}=f$ as $\epsilon \to 0$. For the details of an introduction of this radiation condition, we refer to \cite{Kirsch and Lechleiter1, Kirsch and Lechleiter2}.
\par
Let us prepare for the exact definition of the radiation condition. We denote by $C_{R}:=(0,2\pi) \times (0, R)$ for $R \in (0,\infty]$. The function $u\in H^{1}(C_R)$ is called $\alpha$-quasi periodic if $u(2\pi,x_2)=e^{2\pi i\alpha}u(0,x_2)$. We denote by $H^{1}_{\alpha}(C_R)$ the subspace of the $\alpha$-quasi periodic function in $H^{1}(C_R)$, and $H^{1}_{\alpha, loc}(C_{\infty}):=\{u \in H^{1}_{loc}(C_{\infty}) : u \bigl|_{C_R} \in H^{1}_{\alpha}(C_R)\ \mathrm{for\ all\ R>0} \}$. Then, we consider the following problem, which arises from taking the quasi-periodic Floquet Bloch transform (see, e.g., \cite{Lechleiter2}.) in (\ref{2.1})--(\ref{2.2}): For $\alpha \in [-1/2, 1/2]$, determine $u_{\alpha} \in H^{1}_{\alpha, loc}(C_{\infty})$ such that
\begin{equation}
\Delta u_{\alpha}+k^2nu_\alpha=f_{\alpha} \ \mathrm{in} \ C_{\infty}. \label{2.5}
\end{equation}
\begin{equation}
u_\alpha=0 \ \mathrm{on} \ (0,2\pi)\times \{0 \}. \label{2.6}
\end{equation}
Here, it is a natural to impose the {\it Rayleigh expansion} of the form 
\begin{equation}
u_{\alpha}(x)=\sum_{n \in \mathbb{Z}}u_{n}(\alpha)e^{inx_1+i\sqrt{k^2-(n+\alpha)^2}(x_2-h)}, \  x_2>h, \label{2.7}
\end{equation}
where $u_{n}(\alpha):=(2\pi)^{-1}\int_{0}^{2\pi}u_{\alpha}(x_1,h)e^{-inx_1}dx_1$ are the Fourier coefficients of $u_{\alpha}(\cdot,h)$, and $\sqrt{k^2-(n+\alpha)^2}=i\sqrt{(n+\alpha)^2-k^2}$ if $n+\alpha>k$. But even with this expansion the uniqueness of this problem fails for some $\alpha \in [-1/2, 1/2]$. We call $\alpha$ {\it exceptional values} if there exists non-trivial solutions $u_{\alpha} \in H^{1}_{\alpha, loc}(C_{\infty})$ of (\ref{2.5})--(\ref{2.7}). We set $A_k:=\{\alpha \in [-1/2, 1/2]: \exists l \in \mathbb{Z} \ s.t. \ |\alpha+l|=k \}$, and make the following assumption:
\begin{ass}
For every $\alpha \in A_k$ the solution of $u_{\alpha} \in H^{1}_{\alpha, loc}(C_{\infty})$ of (\ref{2.5})--(\ref{2.7}) has to be zero.
\end{ass}
The following properties of exceptional values was shown in \cite{Kirsch and Lechleiter2}.

\begin{lem}
Let Assumption 2.1 hold. Then, there exists only finitely many exceptional values $\alpha \in [-1/2, 1/2]$. Furthermore, if $\alpha$ is an exceptional value, then so is $-\alpha$. Therefore, the set of exceptional values can be described by $\{\alpha_j:j\in J \}$ where some $J \subset \mathbb{Z}$ is finite and $\alpha_{-j}=-\alpha_j$ for $j \in J$. For each exceptional value $\alpha_j$ we define 
\begin{equation}
X_j:=\left\{ \phi \in H^{1}_{\alpha_j, loc}(C_{\infty}):\begin{array}{cc}
      \Delta \phi+k^2n\phi=0 \ \mathrm{in} \ C_{\infty},\ \ \phi=0 \ \mathrm{for} \ x_2=0, \\
      \phi \ \mathrm{satisfies \ the \ Rayleigh\ expansion}\ (\ref{2.7})
    \end{array}
\right\} \nonumber
\end{equation}
Then, $X_j$ are finite dimensional. We set $m_j=\mathrm{dim}X_j$. Furthermore, $\phi \in X_j$ is evanescent, that is, there exists $c>0$ and $\delta>0$ such that $|\phi(x)|, \ |\nabla \phi(x)|\leq ce^{-\delta |x_2|}$ for all $x\in C_{\infty}$.
\end{lem}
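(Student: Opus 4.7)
The plan is to prove evanescence first and deduce the remaining three assertions from it. I would start by taking an arbitrary $\phi\in X_j$ and applying Green's identity to $\phi$ and $\overline{\phi}$ on the truncated cell $C_R=(0,2\pi)\times(0,R)$ for some $R>h$. Because $n$ is real the identity $\int_{C_R}(\Delta\phi+k^2n\phi)\overline{\phi}\,dx=0$ reduces, after using quasi-periodicity (the contributions on $x_1=0$ and $x_1=2\pi$ cancel), the Dirichlet condition on $x_2=0$, and taking imaginary parts, to
\begin{equation}
0=\mathrm{Im}\int_{0}^{2\pi}\overline{\phi(x_1,R)}\,\partial_{x_2}\phi(x_1,R)\,dx_1=2\pi\!\!\sum_{|n+\alpha_j|<k}\!\!\sqrt{k^2-(n+\alpha_j)^2}\,|u_n(\alpha_j)|^2,
\nonumber
\end{equation}
where I have inserted the Rayleigh expansion (\ref{2.7}) and used that evanescent modes contribute exponentially damped real terms. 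By Assumption 2.1 we have $\alpha_j\notin A_k$, so every propagating index satisfies $|n+\alpha_j|<k$ strictly, hence $u_n(\alpha_j)=0$ for all such $n$. Thus $\phi$ has only evanescent modes.

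The exponential decay then follows directly from the Rayleigh series: for $x_2>h$ one has $|\phi(x)|\leq\sum_{|n+\alpha_j|>k}|u_n(\alpha_j)|e^{-\beta_n(x_2-h)}$ with $\beta_n:=\sqrt{(n+\alpha_j)^2-k^2}$, which is bounded by $c\,e^{-\delta(x_2-h)}$ for $\delta:=\min_{|n+\alpha_j|>k}\beta_n>0$; the same bound holds for $\nabla\phi$ by differentiating the series. On the strip $0\leq x_2\leq h$, interior elliptic regularity for $\Delta\phi+k^2n\phi=0$ gives $\phi\in H^2_{\mathrm{loc}}$ so $\phi$ and $\nabla\phi$ are bounded on this compact set, and absorbing the bound into the constant $c$ yields the desired inequality on all of $C_\infty$.

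For the symmetry $\alpha_{-j}=-\alpha_j$, I would take $\phi\in X_j$ and set $\psi:=\overline{\phi}$. Since $n$ is real, $\psi$ still satisfies $\Delta\psi+k^2n\psi=0$ and vanishes on $x_2=0$, and the identity $\psi(x_1+2\pi,x_2)=e^{-2\pi i\alpha_j}\psi(x_1,x_2)$ makes it $(-\alpha_j)$-quasi-periodic. Because $\phi$ is purely evanescent, the vertical factors $e^{i\sqrt{k^2-(n+\alpha_j)^2}(x_2-h)}=e^{-\beta_n(x_2-h)}$ are real, so conjugation only reshuffles the horizontal Fourier indices $n\mapsto -n$, and one checks directly that the resulting expansion of $\psi$ is the $(-\alpha_j)$-quasi-periodic Rayleigh expansion. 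Hence $\psi\in X_{-j}$, which is therefore non-trivial, so $-\alpha_j$ is an exceptional value.

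For the finite dimensionality of $X_j$ and the finiteness of the set of exceptional values, I would pass to a Fredholm reformulation. Truncating at $\Gamma_h$ and using the Rayleigh-based Dirichlet-to-Neumann map $\Lambda_\alpha$, the problem (\ref{2.5})--(\ref{2.7}) is equivalent to a variational problem on $H^1_{\alpha}(C_h)$ whose sesquilinear form is a compact perturbation of a coercive form; the kernel, which equals (the trace of) $X_j$, is therefore finite dimensional, and since any element of $X_j$ is determined on $C_h$ and continued by its (finite) Rayleigh series above $\Gamma_h$, we conclude $\dim X_j<\infty$. For the finiteness assertion, this sesquilinear form depends holomorphically on $\alpha$ on any component of $[-\tfrac12,\tfrac12]\setminus A_k$, and Assumption 2.1 plus the limiting-absorption solvability of (\ref{2.5})--(\ref{2.7}) at some $\alpha$ (recalled from \cite{Kirsch and Lechleiter2}) shows the associated operator is invertible somewhere on each component. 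The analytic Fredholm theorem then forces the set of exceptional $\alpha$ to be discrete, and combining with compactness and the finiteness of $A_k$ yields only finitely many exceptional values in $[-\tfrac12,\tfrac12]$. The main obstacle I expect is bookkeeping the Rayleigh-series boundary terms so that the energy identity isolates exactly the propagating modes; everything else is a fairly standard application of quasi-periodic Floquet theory and analytic Fredholm theory.
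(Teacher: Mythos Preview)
The paper does not prove this lemma itself; immediately before the statement it says ``The following properties of exceptional values was shown in \cite{Kirsch and Lechleiter2}'', so Lemma~2.2 is quoted without proof from that reference. Your proposal is essentially the standard argument that appears there: the energy identity via Green's formula on $C_R$ together with Assumption~2.1 (ruling out cut-off modes $|n+\alpha_j|=k$) forces all propagating Rayleigh coefficients to vanish, conjugation yields the $\alpha\mapsto-\alpha$ symmetry, and the Dirichlet-to-Neumann reduction to $C_h$ combined with analytic Fredholm theory gives both $\dim X_j<\infty$ and discreteness (hence finiteness) of the exceptional set. So there is nothing to compare against in the present paper, and your sketch is correct and in line with the cited source.
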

Next, we consider the following eigenvalue problem in $X_j$: Determine $d \in \mathbb{R}$ and $\phi \in X_j$ such that
\begin{equation}
-i\int_{C_{\infty}}\frac{\partial \phi}{\partial x_1}\overline{\psi} dx= dk\int_{C_{\infty}}n\phi \overline{\psi}dx,\label{2.8}
\end{equation}
for all $\psi \in X_j$. We denote by the eigenvalues $d_{l,j}$ and eigenfunction $\phi_{l,j}$ of this problem, that is,
\begin{equation}
-i\int_{C_{\infty}}\frac{\partial \phi_{l,j}}{\partial x_1}\overline{\psi} dx= d_{l,j}k\int_{C_{\infty}}n\phi_{l,j} \overline{\psi}dx, \label{2.9}
\end{equation}
for every $l=1,...,m_j$ and $j \in J$. We normalize the eigenfunction $\{\phi_{l,j}: l=1,...,m_j \}$ such that
\begin{equation}
k\int_{C_{\infty}}n\phi_{l,j} \overline{\phi_{l',j}}dx=\delta_{l,l'},\label{2.10}
\end{equation}
for all $l, l'$. We will assume that the wave number $k>0$ is {\it regular} in the following sense.
\begin{dfn}
$k>0$ is {\it regular} if $d_{l,j}\neq 0$ for all $l=1,...m_j$ and $j \in J$.
\end{dfn}
Now we are ready to define the radiation condition. 
\begin{dfn}
Let Assumptions 2.1 hold, and let $k>0$ be regular in the sense of Definition 2.3. We set
\begin{equation}
\psi^{\pm}(x_1):=\frac{1}{2} \left[ 1\pm \frac{2}{\pi}\int_{0}^{x_1/2}\frac{sint}{t}dt \right] , \ x_1 \in \mathbb{R}.\label{2.11}
\end{equation}
Then, $u \in H^{1}_{loc}(\mathbb{R}^2_{+})$ satisfies the {\it radiation condition} if $u$ satisfies the upward propagating radiation condition (\ref{2.4}), and has a decomposition in the form $u=u^{(1)}+u^{(2)}$ where $u^{(1)} \bigl|_{\mathbb{R} \times (0,R)} \in H^{1}(\mathbb{R} \times (0,R))$ for all $R>0$, and $u^{(2)}\in L^{\infty}(\mathbb{R}^{2}_{+})$ has the following form
\begin{equation}
u^{(2)}(x)=\psi^{+}(x_1)\sum_{j \in J} \sum_{d_{l,j}>0}a_{l,j}\phi_{l,j}(x)+\psi^{-}(x_1)\sum_{j \in J} \sum_{d_{l,j}<0}a_{l,j}\phi_{l,j}(x) \label{2.12}
\end{equation}
where some $a_{l,j} \in \mathbb{C}$, and $\{d_{l,j},\phi_{l,j}: l=1,...,m_j \}$ are normalized eigenvalues and eigenfunctions of the problem (\ref{2.8}). 
\end{dfn}
\begin{rem}
It is obvious that we can replace $\psi^{+}$ by any smooth functions $\tilde{\psi}^{\pm}$ with $\tilde{\psi}^{+}(x_1)=1+\mathcal{O}(1/x_1)$ as $x_1\to \infty$ and $\tilde{\psi}^{+}(x_1)=\mathcal{O}(1/x_1)$ as $x_1\to -\infty$ and $\frac{d}{dx_1}\tilde{\psi}^{+}(x_1)\to 0$ as $|x_1|\to \infty$ (and analogously for $\psi^{-}$).
\end{rem}
The following was shown in Theorems 2.2, 6.6, and 6.8 of \cite{Kirsch and Lechleiter2}.
\begin{thm}
For every $f \in L^{2}(\mathbb{R}^2_{+})$ with the compact support in $W$, there exists a unique solution $u_{k+i \epsilon} \in H^{1}(\mathbb{R}^{2}_{+})$ of the problem (\ref{2.1})--(\ref{2.2}) replacing $k$ by $k+i\epsilon$. Furthermore, $u_{k+i \epsilon}$ converge as $\epsilon \to +0$ in $H^{1}_{loc}(\mathbb{R}^{2}_{+})$ to some $u \in H^{1}_{loc}(\mathbb{R}^{2}_{+})$ which satisfy (\ref{2.1})--(\ref{2.2}) and the radiation condition in the sense of Definition 2.4. Furthermore, the solution $u$ of this problem is uniquely determined.
\end{thm}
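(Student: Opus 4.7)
The plan is to follow the limiting-absorption strategy of Kirsch and Lechleiter, using the quasi-periodic Floquet--Bloch transform along $x_1$ to reduce the half-plane problem (\ref{2.1})--(\ref{2.2}) to the family of quasi-periodic strip problems (\ref{2.5})--(\ref{2.7}), and then analysing the limit $\epsilon \to 0^{+}$ Bloch frequency by Bloch frequency.

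For the first assertion, I would prove existence and uniqueness of $u_{k+i\epsilon} \in H^{1}(\mathbb{R}^2_{+})$ by a direct variational argument. Since $(k+i\epsilon)^2 = k^2-\epsilon^2 + 2ik\epsilon$, testing the weak form (\ref{2.3}) (with $k$ replaced by $k+i\epsilon$) against $\overline{u_{k+i\epsilon}}$ and taking the imaginary part yields the absorbing identity $2k\epsilon \int_{\mathbb{R}^2_{+}} n|u|^2\,dx = -\mathrm{Im}\int_{W} f\overline{u}\,dx$. Combined with the positivity of $\int |\nabla u|^2$ and the Dirichlet condition on $\Gamma_0$, this makes the sesquilinear form coercive up to a compact perturbation supported in a strip containing $W$; Fredholm alternative plus the strict sign of the absorbing term gives a unique $H^{1}$-solution, and the positive imaginary part of the spectral parameter forces exponential decay as $|x|\to\infty$.

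For the limiting absorption statement, I would apply the $\alpha$-quasi-periodic Floquet--Bloch transform so that $u_{k+i\epsilon}$ decomposes into components $u_{k+i\epsilon,\alpha} \in H^{1}_{\alpha}(C_{\infty})$ solving (\ref{2.5})--(\ref{2.7}) with $k$ replaced by $k+i\epsilon$. For $\alpha$ not exceptional and not in $A_k$ (see Assumption 2.1), the strip problem is invertible at $k$, so $u_{k+i\epsilon,\alpha} \to u_\alpha$ in $H^{1}(C_R)$ uniformly in a neighbourhood of $\alpha$, and the Rayleigh coefficients in (\ref{2.7}) emerge as the limit of the corresponding coefficients for $k+i\epsilon$. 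Around each exceptional value $\alpha_j$, the resolvent is singular and the limit is picked up by a residue contribution living in $X_j$; the eigenvalue problem (\ref{2.8}) with normalization (\ref{2.10}) diagonalizes this singular part, and the sign of $d_{l,j}$ (nonzero by Definition 2.3) determines whether the mode $\phi_{l,j}$ propagates towards $x_1 = +\infty$ or $x_1 = -\infty$ after the inverse Floquet--Bloch transform. Reassembling the pieces via a contour-shift / principal-value argument in the $\alpha$-integral produces the decomposition $u = u^{(1)} + u^{(2)}$, with $u^{(1)}|_{\mathbb{R}\times(0,R)} \in H^{1}$ coming from the regular part of the integrand and $u^{(2)}$ taking the form (\ref{2.12}), the cutoffs $\psi^{\pm}$ arising as the partial Fourier transforms of the one-sided residues at each $\alpha_j$.

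For uniqueness, let $v$ be the difference of two radiating solutions with $f=0$. Applying the Floquet--Bloch transform, $v_\alpha$ must be a homogeneous solution in $H^{1}_{\alpha,loc}(C_\infty)$; for non-exceptional $\alpha$ this forces $v_\alpha = 0$, so $v$ lies in the span of the $\phi_{l,j}$ up to the cutoffs $\psi^{\pm}$. Plugging $v$ into a Green/Rellich identity on $(-R,R)\times(0,H)$, letting $R\to\infty$, and using Remark 2.5 (only boundary terms at $x_1=\pm\infty$ survive) together with the upward propagating condition (\ref{2.4}) and the biorthogonality (\ref{2.10}) yields a positive definite expression in the coefficients $a_{l,j}$, forcing $a_{l,j}=0$. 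The key point is that $d_{l,j}\neq 0$ makes these boundary contributions nondegenerate, which is precisely the role of the regularity assumption.

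The main obstacle is the residue analysis at the exceptional values $\alpha_j$: correctly identifying the slow-decay ansatz with the one-sided cutoffs $\psi^{\pm}$ and matching it to the singular part of the quasi-periodic resolvent at $\alpha=\alpha_j$ as $\epsilon\to 0^+$. Once this matching is done, both the existence of the limit and the uniqueness argument reduce to comparatively standard Green-identity and biorthogonality computations.
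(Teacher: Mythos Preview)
The paper does not actually prove this theorem: the sentence immediately preceding it reads ``The following was shown in Theorems~2.2, 6.6, and 6.8 of \cite{Kirsch and Lechleiter2},'' and no argument is given in the paper itself. So there is nothing to compare your proposal against within this paper.

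That said, your outline is essentially a faithful sketch of the Kirsch--Lechleiter argument in the cited reference: variational well-posedness for the absorbing problem, reduction via the quasi-periodic Floquet--Bloch transform to the strip problems (\ref{2.5})--(\ref{2.7}), uniform resolvent bounds away from the exceptional $\alpha_j$, a residue/principal-value analysis at each $\alpha_j$ whose sign structure (governed by the $d_{l,j}$) produces the one-sided propagating modes (\ref{2.12}), and a Green-identity uniqueness argument exploiting the normalization (\ref{2.10}) and the regularity assumption $d_{l,j}\neq 0$. You have correctly identified the main technical difficulty as the matching of the singular resolvent contribution at $\alpha_j$ to the ansatz with the cutoffs $\psi^{\pm}$. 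If you wish to write out a self-contained proof rather than cite the result, the place where genuine work is needed is a quantitative resolvent estimate in a neighbourhood of each $\alpha_j$ (typically obtained by a Lyapunov--Schmidt reduction onto the finite-dimensional kernel $X_j$) uniform in $\epsilon$, which is what justifies passing to the limit and isolating the residue; your proposal gestures at this but does not spell it out.
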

Furthermore, with the same radiation condition and the following additional assumption, the well-posedness of the perturbed scattering problem of (\ref{2.1})--(\ref{2.2}) was show in \cite{Furuya1}.
\begin{ass}
We assume that $k^2$ is not the point spectrum of $\frac{1}{(1+q)n}\Delta$ in $H^{1}_{0}(\mathbb{R}^{2}_{+})$, that is, evey $v \in H^{1}(\mathbb{R}^{2}_{+})$ which satisfies
\begin{equation}
\Delta v+k^2(1+q)nv=0 \ \mathrm{in} \ \mathbb{R}^2_{+}, \label{2.14}
\end{equation}
\begin{equation}
v=0 \ \mathrm{on} \ \Gamma_0, \label{2.15}
\end{equation}
has to vanishes for $x_2>0$.
\end{ass}
\begin{thm}
Let Assumption 2.7 hold and let $f \in L^{2}(\mathbb{R}^2_{+})$ such that $\mathrm{supp}f=Q$. Then, there exists a unique solution $u \in H^{1}_{loc}(\mathbb{R}^2_{+})$ such that
\begin{equation}
\Delta u+k^2(1+q)nu=f \ \mathrm{in} \ \mathbb{R}^2_{+}, \label{2.16}
\end{equation}
\begin{equation}
u=0 \ \mathrm{on} \ \Gamma_0, \label{2.17}
\end{equation}
and $u$ satisfies the radiation condition in the sense of Definition 2.4.
\end{thm}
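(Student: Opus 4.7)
The plan is to reformulate the perturbed problem as a Lippmann--Schwinger-type integral equation and apply the Fredholm alternative. Let $T \colon L^{2}_{\mathrm{comp}}(\mathbb{R}^{2}_{+}) \to H^{1}_{loc}(\mathbb{R}^{2}_{+})$ denote the unperturbed solution operator from Theorem~2.6, which sends a compactly supported source $g$ to the unique radiating solution of $\Delta w + k^{2} n w = g$ with $w|_{\Gamma_{0}} = 0$. Rewriting (\ref{2.16}) as $\Delta u + k^{2} n u = f - k^{2} n q u$ and using that $nqu$ is supported in $Q$, the problem becomes equivalent to
\[
u + k^{2} T(nq\, u) = Tf \quad \text{in } \mathbb{R}^{2}_{+}.
\]
Restricting to $Q$, the operator $K \colon v \mapsto k^{2} [T(nqv)]|_{Q}$ factors through $H^{1}$ of a bounded neighborhood of $Q$, so the compact Sobolev embedding makes $K$ compact on $L^{2}(Q)$.

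The Fredholm alternative on $L^{2}(Q)$ then reduces the theorem to injectivity: any $u \in H^{1}_{loc}(\mathbb{R}^{2}_{+})$ solving the homogeneous problem with $u|_{\Gamma_{0}} = 0$ and satisfying the radiation condition of Definition~2.4 must vanish. Once the value $v := u|_{Q}$ is determined from the integral equation, the extension $u := Tf - k^{2} T(nqv)$ automatically lies in $H^{1}_{loc}(\mathbb{R}^{2}_{+})$ and satisfies the radiation condition because $T$ outputs radiating solutions.

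For the injectivity step my strategy is two-stage: first, show that the guided-wave coefficients $a_{l,j}$ in the decomposition (\ref{2.12}) all vanish, which upgrades $u$ from $H^{1}_{loc}$ to $H^{1}(\mathbb{R}^{2}_{+})$; second, invoke Assumption~2.7 to conclude $u \equiv 0$. For the first step I would use a flux identity on the rectangle $D_{R,M} := (-R,R) \times (0,M)$ with $M > h$ and $R$ large enough that $Q \subset (-R,R) \times (0,h)$. Integrating $\bar{u}\Delta u - u \Delta \bar{u}$ over $D_{R,M}$, the bulk cancels because $(1+q)n$ is real and the bottom boundary contributes nothing by the Dirichlet condition. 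Letting $M \to \infty$, the flux through $\Gamma_{M}$ captures the propagating Rayleigh modes of $u^{(1)}$ with a non-negative sign. Letting $R \to \infty$, the asymptotics of $\psi^{\pm}$ from Remark~2.5, the orthonormalization (\ref{2.10}), the eigenvalue relation (\ref{2.8}), and the evanescence of each $\phi_{l,j}$ in $x_{2}$ combine to give a lateral flux equal to a positive linear combination of the $|a_{l,j}|^{2}$ with coefficients proportional to $|d_{l,j}|$, non-zero by regularity of $k$. Since the total outgoing flux must vanish for the homogeneous problem, each $a_{l,j} = 0$.

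The hardest part is the flux bookkeeping in the first step: the modulations $\psi^{\pm}$ destroy the strict quasi-periodicity of $u^{(2)}$, so terms combining $(\psi^{\pm})'$ with the $\phi_{l,j}$, as well as the cross-terms between $u^{(1)}$ and $u^{(2)}$, must be shown to have suitable limits as $R,M \to \infty$. Once this is handled, the second step is immediate from Assumption~2.7 applied to $u \in H^{1}(\mathbb{R}^{2}_{+})$.
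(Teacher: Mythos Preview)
The paper does not contain its own proof of this theorem; it is quoted from \cite{Furuya1} (see the sentence immediately preceding Assumption~2.7 and the line after Theorem~2.8). So there is no in-paper proof to compare against.

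That said, your outline is the standard route and is consistent with how the surrounding material in this paper is organized. The Lippmann--Schwinger reformulation $u + k^{2}T(nq\,u) = Tf$ via the unperturbed solution operator of Theorem~2.6, compactness of the restriction to $L^{2}(Q)$ through $H^{1}(Q)\hookrightarrow L^{2}(Q)$, and the Fredholm alternative are all correct. Your uniqueness argument---a flux identity forcing the coefficients $a_{l,j}$ to vanish, followed by Assumption~2.7---is exactly the mechanism the paper invokes elsewhere: the inequality (4.22)--(4.23) in the proof of Lemma~4.3(c), which is itself attributed to Theorem~3.1 of \cite{Furuya1}, is precisely this flux computation, and for the homogeneous problem it collapses to $\sum_{l,j}|a_{l,j}|^{2}|d_{l,j}|\le 0$.

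Two points deserve a little more care than your sketch gives them. First, the contribution through the top boundary $\Gamma_{M}$ is not a sum over discrete ``Rayleigh modes of $u^{(1)}$'' but an integral over the continuous Fourier spectrum of $u^{(1)}(\cdot,h)\in L^{2}(\mathbb{R})$ coming from the upward propagating radiation condition (2.4); its imaginary part is $\int_{|\xi|<k}\sqrt{k^{2}-\xi^{2}}\,|\widehat{u^{(1)}}(\xi,h)|^{2}\,d\xi\ge 0$, and you need this term to vanish as well. Second, the implication ``$a_{l,j}=0$ for all $l,j$ $\Rightarrow$ $u\in H^{1}(\mathbb{R}^{2}_{+})$'' is not automatic from Definition~2.4: vanishing of $u^{(2)}$ only gives $u=u^{(1)}\in H^{1}(\mathbb{R}\times(0,R))$ for each finite $R$, and you still need the vanishing of the propagating Fourier modes above $\Gamma_{h}$ (from the previous point) to get exponential decay in $x_{2}$ and hence $u\in H^{1}(\mathbb{R}^{2}_{+})$, which is what Assumption~2.7 requires. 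Once you make these two steps explicit, the argument closes.
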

By Theorem 2.8, the well-posedness of the perturbed scattering problem (\ref{1.1})--(\ref{1.2}) with the radiation condition follows. Then, we are able to consider the inverse problem of determining the support of $q$ from measured scattered field $u^s$ by the incident field $u^{i}(x,y)=\overline{G_n(x,y)}$. In the following sections we will discuss the inverse problem.
\section{The factorization method}
In Section 3, we mention the exact functional analytic theorem in ($F_{\#}$--) factorization method. The following functional analytic theorem is given by the almost same argument in Theorem 2.15 of \cite{Kirsch and Grinberg}.
\begin{thm}
Let $X \subset U\subset X^{*}$ be a Gelfand triple with a Hilbert space $U$ and a reflexive Banach space $X$ such that the imbedding is dense. Furthermore, let Y be a second Hilbert space and let $F:Y \to Y$, $G:X \to Y$, $T:X^{*} \to X$ be linear bounded operators such that 
\begin{equation}
F=GTG^{*}.\label{3.1}
\end{equation}
We make the following assumptions:
\begin{description}

\item[(1)] $G$ is compact with dense range in $Y$.

\item[(2)] There exists $t\in [0,2 \pi]$ such that $\mathrm{Re}(\mathrm{e}^{it}T)$ has the form $\mathrm{Re}(\mathrm{e}^{it}T)=C+K$ with some compact operator $K$ and some self-adjoint and positive coercive operator $C$, i.e., there exists $c>0$ such that
\begin{equation}
\langle \varphi,  C \varphi \rangle \geq c \left\| \varphi \right\|^2 \ for \ all \ \varphi \in X^{*}. \label{3.2}
\end{equation}

\item[(3)]
$\mathrm{Im} \langle \varphi, T \varphi \rangle > 0$ for all $\varphi \in \overline{\mathrm{Ran}(G^{*})}$ with $\varphi \neq 0$.
\end{description}
Then, the operator $F_{\#}:=\bigl|\mathrm{Re}(\mathrm{e}^{it}F)\bigr|+\mathrm{Im}F$ is non-negative, and the ranges of $G:X \to Y$ and $F_{\#}^{1/2}:Y \to Y$ coincide with each other, that is, we have the following range identity;
\begin{equation}
\mathrm{Ran}(G)=\mathrm{Ran}(F_{\#}^{1/2}).\label{3.3}
\end{equation}
\end{thm}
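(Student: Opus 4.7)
The approach is to follow the proof of Theorem 2.15 of \cite{Kirsch and Grinberg} essentially line by line; the statement above differs only in allowing a general rotation angle $t\in[0,2\pi]$, which does not affect the structure of the argument. I would begin by checking that $F_\#$ is self-adjoint and non-negative. Decomposing $T=T_R+iT_I$ into its self-adjoint real and imaginary parts, one has $\mathrm{Im}F=GT_IG^*$, and assumption~(3) forces $\langle\varphi,T_I\varphi\rangle\ge 0$ for $\varphi\in\overline{\mathrm{Ran}(G^*)}$, so $\mathrm{Im}F\ge 0$. Combined with $|\mathrm{Re}(e^{it}F)|\ge 0$, this yields $F_\#\ge 0$ and hence a well-defined $F_\#^{1/2}$.

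The heart of the argument is to produce a bounded self-adjoint operator $S:X^*\to X$ with $F_\#=GSG^*$ and a coercivity estimate $\langle\varphi,S\varphi\rangle\ge c_0\|\varphi\|^2$ for all $\varphi\in\overline{\mathrm{Ran}(G^*)}$. I would set $A:=\mathrm{Re}(e^{it}F)=G(C+K)G^*$ and argue that $C+K$ is a compact perturbation of the coercive operator $C$, hence Fredholm of index zero on $\overline{\mathrm{Ran}(G^*)}$, and then use assumption~(3) (via injectivity on that subspace) to upgrade to invertibility there. This permits one to rewrite the absolute value in the factored form $|A|=G\widetilde S_R G^*$ with $\widetilde S_R$ a compact perturbation of the coercive $|C|$. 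Adding $GT_IG^*$ yields $F_\#=G(\widetilde S_R+T_I)G^*$, and the coercivity of $\widetilde S_R+T_I$ on $\overline{\mathrm{Ran}(G^*)}$ follows from a standard contradiction argument combining compactness of the perturbation, coercivity of $|C|$, and the strict positivity of $T_I$ furnished by assumption~(3).

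Given such a factorization, the abstract range identity is the final step: if $F_\#=GSG^*$ with $G$ compact of dense range and $S$ self-adjoint coercive on $\overline{\mathrm{Ran}(G^*)}$, then the two-sided estimate $\|F_\#^{1/2}\psi\|_Y\asymp\|G^*\psi\|_{X^*}$ for $\psi\in Y$ yields $\mathrm{Ran}(F_\#^{1/2})=\mathrm{Ran}(G)$. The main obstacle is the construction of $S$: the absolute value $|A|$ is taken at the level of the operator acting on $Y$, while the factorization lives on $X^*$, and these two operations do not commute with conjugation by $G$. Handling this cleanly is precisely where the joint use of assumptions~(2) and~(3) is needed, and is also the step at which any subtle flaw in the $F_\#$ formulation would occur, consistent with the discussion of Lechleiter's counterexample promised in the introduction.
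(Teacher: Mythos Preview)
Your outline tracks the paper's argument in broad strokes---reduce via Kirsch--Grinberg Theorem~2.15, obtain a factorization $F_\#=GT_\#G^*$, prove $T_\#$ is coercive, and conclude with the abstract range identity---but there is a genuine gap at the coercivity step, and one of your intermediate claims is incorrect.

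First, you write that assumption~(3) ``(via injectivity on that subspace)'' upgrades the Fredholm operator $C+K=\mathrm{Re}(e^{it}T)$ to invertibility on $\overline{\mathrm{Ran}(G^*)}$. This does not follow: assumption~(3) concerns $\mathrm{Im}\,T$, not $\mathrm{Re}(e^{it}T)$, and injectivity of $T$ (or of $\mathrm{Im}\,T$) in no way forces injectivity of its real part. Indeed, failure of exactly this kind of inference is the content of the counterexample to Lechleiter's version. Fortunately the factorization $|\mathrm{Re}(e^{it}F)|=G\,\mathrm{Re}(e^{it}T)D\,G^*$ with $D$ an isomorphism (Parts B--D of Kirsch--Grinberg) does \emph{not} require $C+K$ to be invertible, so this error is harmless to the structure but indicates a misconception.

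Second, and more seriously, the coercivity of $T_\#=\mathrm{Re}(e^{it}T)D+\mathrm{Im}\,T$ is not a ``standard contradiction argument''. The point of the present theorem, as opposed to Theorem~2.15 in Kirsch--Grinberg, is that $\mathrm{Im}\,T$ is \emph{not} assumed compact, so one cannot simply pass to a convergent subsequence. The paper instead first invokes the inequality $\langle\varphi,A\varphi\rangle\ge C\|A\varphi\|^2$ for bounded non-negative self-adjoint $A$ (Kirsch, inequality~(4.5)) to obtain
\[
\langle\varphi,T_\#\varphi\rangle\ \ge\ C\bigl(\|\mathrm{Re}(e^{it}T)D\varphi\|^2+\|\mathrm{Im}\,T\,\varphi\|^2\bigr),
\]
and then proves and applies a separate lemma: if $S$ has closed range and finite-dimensional kernel (Fredholm) and $K$ is injective, then $\|u\|^2\le C(\|Su\|^2+\|Ku\|^2)$. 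Here $S=\mathrm{Re}(e^{it}T)D$ is Fredholm by assumption~(2), and $K=\mathrm{Im}\,T$ is injective by assumption~(3). The proof of this lemma is a genuine decomposition argument (invert $S$ on $\mathrm{Ker}(S)^\perp$, use injectivity of $K$ on the finite-dimensional $\mathrm{Ker}(S)$), not a compactness-based contradiction. Your proposal elides exactly this new ingredient, which is the technical heart of the theorem.
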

Here, the real part and the imaginary part of an operator $A$ are self-adjoint operators given by
\begin{equation}
\mathrm{Re}(A)=\displaystyle \frac{A+A^{*}}{2} \ \ \ \mathrm{and} \ \ \ \mathrm{Im}(A)=\displaystyle \frac{A-A^{*}}{2i}.\label{3.4}
\end{equation}
\begin{rem}
Here, we will mention a mistake of Theorem 2.1 in \cite{Lechleiter1}. It was introduced in order to avoid that $k^{2}$ is not a transmission eigenvalue corresponding to the unknown medium. To realize it, we replaced the assumptions (3) by the injectivity of $T$. However, its condition is not enough to obtain the range identity (\ref{3.3}). The following matrixes are its counterexample in which the strictly positivity of $\mathrm{Im}T$ is missing and the range identity (\ref{3.3}) fails.
\begin{equation}
\left(\begin{array}{cccc}
      1 &0 &0 &0 \\
      0 &1 &0 &0  
    \end{array}\right)
    \left(\begin{array}{cccc}
      0 &0 &0 &1 \\
      0 &0 &1 &0  \\
      0 &1 &0 &0 \\
      1 &0 &0 &0 
    \end{array}
\right)\left(\begin{array}{cc}
      1 &0 \\
      0 &1 \\
      0 &0 \\
      0 &0 
    \end{array}\right)
    =\left(\begin{array}{cc}
      0 &0 \\
      0 &0 
    \end{array}\right)
    ,\label{3.5}
\end{equation}
\begin{equation}
\mathrm{Ran}\left(\begin{array}{cccc}
      1 &0 &0 &0 \\
      0 &1 &0 &0  
    \end{array}\right)\neq \mathrm{Ran}\left(\begin{array}{cc}
      0 &0 \\
      0 &0 
    \end{array}\right).\label{3.6}
\end{equation}
From this counterexample one can not expect the range identity without $\mathrm{Im}T>0$. Therefore, in the factorization method for inverse medium scattering problem, we have to assume that $k^{2}$ is not a transmission eigenvalue corresponding to the unknown medium in order to have the strictly positivity of $\mathrm{Im}T$. 
\par 
In this section, we will prove Theorem 3.1 based on Theorem 2.15. We remark that in Theorem 2.15 of \cite{Kirsch and Grinberg} we assumed that $\mathrm{Im}T$ is compact, while in Theorem 3.1 of this section we will not assume its compactness. (The operator $\mathrm{Im}T$ is not compact in the case of inverse medium scattering problem with complex valued contrast function, See Theorem 4.5 of \cite{Kirsch and Grinberg}.) Before the proof of Theorem 3.1, we will show the following lemma.
\begin{lem}
Let X be a Hlbert space, and let $T: X \to X$ be a linear bounded, and let $K: X \to X$ be a linear bounded injective.
We assume that
\begin{equation}
\mathrm{Ran}(T) \ \mathrm{is \ closed \ subspace \ in} \ X, \ \  \mathrm{and} \ \ \mathrm{dim Ker}(T) <\infty.
\end{equation}
Then, there is a constant $C>0$ such that 
\begin{eqnarray}
\left\| u \right\|^2_{X} \leq
C (\left\| Tu \right\|^{2}_{X}+\left\| Ku \right\|^{2}_{X}) \ \ \ for \ all \ u \in X.
\end{eqnarray}

\end{lem}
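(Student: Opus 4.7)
The plan is to exploit the orthogonal decomposition $X=\mathrm{Ker}(T)\oplus\mathrm{Ker}(T)^{\perp}$ (legal because $\mathrm{Ker}(T)$ is finite-dimensional, hence closed) and estimate each component separately: $T$ will control the $\mathrm{Ker}(T)^{\perp}$-component via the closed range theorem, while $K$ will control the finite-dimensional $\mathrm{Ker}(T)$-component by a bounded-below argument in finite dimensions.

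Concretely, first I would note that the restriction $T|_{\mathrm{Ker}(T)^{\perp}}:\mathrm{Ker}(T)^{\perp}\to \mathrm{Ran}(T)$ is a continuous bijection between Banach spaces (bijectivity by definition of the quotient; $\mathrm{Ran}(T)$ is closed so it is Banach). The open mapping theorem then yields a constant $c_{1}>0$ with
\begin{equation}
\|Tu_{1}\|_{X}\ \geq\ c_{1}\|u_{1}\|_{X}\qquad \text{for all } u_{1}\in \mathrm{Ker}(T)^{\perp}. \nonumber
\end{equation}
Second, since $\mathrm{Ker}(T)$ is finite-dimensional, the injective bounded operator $K|_{\mathrm{Ker}(T)}$ maps into a finite-dimensional (hence closed) subspace, and any injective linear map on a finite-dimensional normed space is bounded below; thus there is $c_{2}>0$ with $\|Kv\|_{X}\geq c_{2}\|v\|_{X}$ for every $v\in \mathrm{Ker}(T)$.

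To combine the two bounds, I decompose $u=u_{0}+u_{1}$ with $u_{0}\in \mathrm{Ker}(T)$ and $u_{1}\in \mathrm{Ker}(T)^{\perp}$. Since $Tu=Tu_{1}$, the first estimate gives $\|u_{1}\|_{X}\leq c_{1}^{-1}\|Tu\|_{X}$. Writing $Ku_{0}=Ku-Ku_{1}$ and using $\|K\|_{X\to X}<\infty$,
\begin{equation}
c_{2}\|u_{0}\|_{X}\ \leq\ \|Ku_{0}\|_{X}\ \leq\ \|Ku\|_{X}+\|K\|\,\|u_{1}\|_{X}\ \leq\ \|Ku\|_{X}+\frac{\|K\|}{c_{1}}\|Tu\|_{X}. \nonumber
\end{equation}
Using orthogonality $\|u\|_{X}^{2}=\|u_{0}\|_{X}^{2}+\|u_{1}\|_{X}^{2}$ and the elementary inequality $(a+b)^{2}\leq 2a^{2}+2b^{2}$, the two bounds assemble into the claimed estimate with $C:=2c_{2}^{-2}(1+\|K\|^{2}c_{1}^{-2})+2c_{1}^{-2}$.

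The argument has no real obstacle; the only point worth flagging is that one does \emph{not} need compactness of $K$, only injectivity, because the part of $u$ that $T$ fails to see lives in the finite-dimensional space $\mathrm{Ker}(T)$, where injectivity is automatically equivalent to being bounded below. The closed-range hypothesis on $T$ is used exactly once, to invoke the open mapping theorem and obtain the lower bound on $T|_{\mathrm{Ker}(T)^{\perp}}$.
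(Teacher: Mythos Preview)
Your proof is correct and follows essentially the same approach as the paper: both arguments use the orthogonal decomposition $X=\mathrm{Ker}(T)\oplus \mathrm{Ker}(T)^{\perp}$, invoke the open mapping (equivalently closed graph) theorem to get a lower bound for $T|_{\mathrm{Ker}(T)^{\perp}}$, and use finite-dimensionality of $\mathrm{Ker}(T)$ together with injectivity of $K$ to bound the kernel component. The only cosmetic difference is that the paper phrases the argument by contradiction (extracting a normalized sequence with $\|Tu_m\|^2+\|Ku_m\|^2\to 0$), whereas you give the estimate directly with an explicit constant; the underlying mechanism is identical.
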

\begin{proof}[{\bf Proof of Lemma 3.3}]
Assume that on contrary for any $C>0$, there exists a $u_c \in X$ such that
\begin{eqnarray}
\left\| u_c \right\|^2_{X} >
C (\left\| Tu_c \right\|^2_{X}+\left\| Ku_c \right\|^2_{X}).
\end{eqnarray}
Then, we can choose a sequence $\{u_m \} \subset X$ with $\left\| u_m \right\|^2=1$ such that $\left\| Tu_m \right\|^2+\left\| Ku_m \right\|^2$ converge to zero as $m \to \infty$. Since $\mathrm{Ker}(T)$ is finite dimensional subspace in $X$, there exists an orthogonal complement $\mathrm{Ker}(T)^{\bot}$ of $\mathrm{Ker}(T)$ such that $X=\mathrm{Ker}(T)\oplus \mathrm{Ker}(T)^{\bot}$. Since $\mathrm{Ker}(T)^{\bot}$ and $\mathrm{Ran}(T)$ is closed subspaces in $X$, the restrict operator $T\big|_{\mathrm{Ker}(T)^{\bot}}$ is injective and surjective from the Banach space $\mathrm{Ker}(T)^{\bot}$ to the Banach space $\mathrm{Ran}(T)$. Then by the closed graph theorem, $T\big|_{\mathrm{Ker}(T)^{\bot}}$ is invertible bounded, which implies that there is a constant $C>0$ such that
\begin{equation}
\left\| Pu_m \right\|^2\leq C\left\| T\big|_{\mathrm{Ker}(T)^{\bot}}Pu_m \right\|^2=C\left\| Tu_m \right\|^2.\label{bbb}
\end{equation}
Since $K$ is injective and $\mathrm{Ker}(T)$ is finite dimensional subspace in $X$, there is a constant $C>0$ such that
\begin{equation}
\left\| v \right\| \leq C\left\| Kv \right\|^2 \ \ \ for \ all \ v \in \mathrm{Ker}(T).
\end{equation}
(If not, we can take a sequence $\{v_m \}\subset \mathrm{Ker}(T)$ with $\left\| v_m \right\|=1$ and $\left\| Kv_m \right\|\to 0$. Since $\mathrm{Ker}(T)$ is finite dimensional subspace, there exists $v_0 \in \mathrm{Ker}(T)$ such that $v_m \to v_0$ and $Kv_m \to Kv_0$ as $m \to \infty$, which implies that $v_0=0$ by the injectivity of $K$. This contradicts with $\left\| v_m \right\|=1$.)
\par
Then, there is a constant $C>0$ such that
\begin{eqnarray}
\left\| (I-P)u_m \right\|^2 &\leq&
C \left\| K(I-P)u_m \right\|^{2} \leq 2(C \left\| Ku_m \right\|^{2}+ \left\| KPu_m \right\|^{2})\nonumber\\
&\leq&
2C (\left\| Ku_m \right\|^{2}+ \left\| K\right\|^{2}\left\|Pu_m \right\|^{2})\label{ccc}.
\end{eqnarray}
Therefore, by (\ref{bbb}) and (\ref{ccc}) there exists a constant $C'>0$ such that
\begin{equation}
1=\left\| Pu_m \right\|^2+\left\| (I-P)u_m \right\|^2 \leq C'(\left\| Tu_m \right\|^2+\left\| Ku_m \right\|^2).
\end{equation}
As $m \to \infty$ the right hand side of above inequality converges to zero, which contradicts.
\end{proof}
We will show Theorem 3.1.
\begin{proof}[{\bf Proof of Theorem 3.1}]
By the same argument of Part A (Reduction) in the proof of Theorem 2.15 (\cite{Kirsch and Grinberg}), we can restrict ourselves to the case $X=U$ and $C=I$. Furthermore, we can also  restrict ourselves to the case $G$ is injective. Indeed, let $P: U \to \overline{\mathrm{Ran}(G^{*})}$ be the orthogonal projection onto $\hat{U}:=\overline{\mathrm{Ran}(G^{*})} \subset U$. Then, $PG^{*}=G^{*}$ and $G=GP$. By this, we can have the factorization of the form
\begin{equation}
F=GPTPG^*=\hat{G}\hat{T}\hat{G^*}
\end{equation}
where $\hat{G}\bigl|_{\hat{U}}:\hat{U} \to Y$ and $\hat{T}=PT\bigl|_{\hat{U}}: \hat{U} \to \hat{U}$. Therefore, all of assumptions (1)--(3) are satisfied. (We remark that $\hat{T}$ is not injective even if $T$ is injective, which leads to error in Theorem 2.1 of \cite{Lechleiter1}.)
\par
By the same argument in Part B, C, and D in the proof of Theorem 2.15 (\cite{Kirsch and Grinberg}), we can show that 
\begin{equation}
F_{\#}=GT_{\#}\hat{G^*}
\end{equation}
where $T_{\#}=\mathrm{Re}(\mathrm{e}^{it}T)D+\mathrm{Im}T$ and $D$ is an isomorphism from $U$ onto itself. It was shown that the operator $T_{\#}$ is non-negative on $U$ in the proof. By applying the inequality (4.5) of \cite{Kirsch} to the non-negative operators $\mathrm{Re}(\mathrm{e}^{it}T)D$ and $\mathrm{Im}T$, there is a constant $C>0$ such that 
\begin{eqnarray}
\langle \varphi,  T_{\#} \varphi \rangle
&=&
\langle \varphi,  \mathrm{Re}(\mathrm{e}^{it}T)D \varphi \rangle
+\langle \varphi,  \mathrm{Im}T \varphi \rangle \nonumber\\
&\geq&
C (\left\| \mathrm{Re}(\mathrm{e}^{it}T)D  \varphi \right\|^2+\left\| \mathrm{Im}T \varphi \right\|^2) \ \ \  \ for \ all \ \varphi \in U.
\end{eqnarray}
By assumption (2) $\mathrm{Re}(\mathrm{e}^{it}T)D$ is a Fredholm operator, and by assumption (3) $\mathrm{Im}T$ is injective. Therefore by applying Lemma 3.3 to our operators, there is a constant $C>0$ such that
\begin{eqnarray}
C(\left\| \mathrm{Re}(\mathrm{e}^{it}T)D  \varphi \right\|+\left\| \mathrm{Im}T \varphi \right\|)
\geq
\left\| \varphi \right\| \ \ \  \ for \ all \ \varphi \in U,
\end{eqnarray}
which implies that the operator $T_{\#}:U\to U$ is positive coercive. Since we can write
\begin{equation}
F_{\#}=F_{\#}^{1/2}(F_{\#}^{1/2})^{*}=GT_{\#}G^{*},
\end{equation}
then by applying Theorem 1.21 of \cite{Kirsch and Grinberg}, we have the range of $F_{\#}^{1/2}$ and $G$ coincide. We have shown Theorem 3.1. 
\end{proof}
\end{rem}
\section{A factorization of the near field operator}
In Section 4, we discuss a factorization of the near field operator $N$. We define the operator $L:L^{2}(Q)\to L^{2}(M)$ by $Lf:=v\bigl|_{M}$ where $v$ satisfies the radiation condition in the sense of Definition 2.4 and 
\begin{equation}
\Delta v+k^2(1+q)nv=-k^{2}\frac{nq}{\sqrt{|nq|}}f, \ \mathrm{in} \ \mathbb{R}^2_{+}, \label{4.1}
\end{equation}
\begin{equation}
v=0 \ \mathrm{on} \ \mathbb{R}\times \{0\}. \label{4.2}
\end{equation}
We define $H:L^{2}(M)\to L^{2}(Q)$ by
\begin{equation}
Hg(x):=\sqrt{|n(x)q(x)|}\int_{M}\overline{G_n(x,y)}g(y)ds(y), \ x \in Q.\label{4.3}
\end{equation}
Then, by these definition we have $N=LH$. In order to make a symmetricity of the factorization of the near field operator $N$, we will show the following symmetricity of the Green function $G_n$.
\begin{lem}
\begin{equation}
G_n(x,y)=G_n(y,x), \ x\neq y.\label{4.4}
\end{equation}
\end{lem}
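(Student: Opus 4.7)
The plan is to prove the symmetry of $G_n$ by combining Green's second identity with the limiting absorption principle underlying Theorem~2.6. The point is to avoid handling the non-decaying $L^{\infty}$-modes in the radiation condition of Definition~2.4 directly; instead, I would transfer the identity to a dissipative approximation where those modes are damped, and then pass to the limit.

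First, for $\epsilon>0$, introduce the absorbing Green's function $G_n^{\epsilon}(x,y)$ of $\Delta+(k+i\epsilon)^2 n$ in $\mathbb{R}^2_{+}$ with Dirichlet data on $\Gamma_0$. By analogy with (\ref{1.3}), decompose $G_n^{\epsilon}=G^{\epsilon}+\tilde{u}^{s,\epsilon}$, where $G^{\epsilon}(x,y):=\Phi_{k+i\epsilon}(x,y)-\Phi_{k+i\epsilon}(x,y^*)$ is the free half-plane Dirichlet Green's function for $\Delta+(k+i\epsilon)^2$ (manifestly symmetric in $(x,y)$ because $|x-y^*|=|y-x^*|$), and $\tilde{u}^{s,\epsilon}$ solves the $\epsilon$-analogue of (\ref{1.5}) with source $(k+i\epsilon)^2(1-n)G^{\epsilon}(\cdot,y)$, which has compact support in $W$. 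By Theorem~2.6, $\tilde{u}^{s,\epsilon}\in H^{1}(\mathbb{R}^2_{+})$ and $\tilde{u}^{s,\epsilon}\to \tilde{u}^{s}$ in $H^{1}_{\mathrm{loc}}$ as $\epsilon\to 0^{+}$; combined with the pointwise convergence $G^{\epsilon}\to G$ away from the diagonal, this yields $G_n^{\epsilon}(x,y)\to G_n(x,y)$ for all $x\neq y$ (the pointwise convergence follows from interior elliptic regularity upgrading $H^{1}_{\mathrm{loc}}$ to $H^{2}_{\mathrm{loc}}$ and the two-dimensional Sobolev embedding).

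Next, I would establish $G_n^{\epsilon}(x,y)=G_n^{\epsilon}(y,x)$ by applying Green's second identity to $u:=G_n^{\epsilon}(\cdot,x)$ and $v:=G_n^{\epsilon}(\cdot,y)$ on the rectangle $D_{R,m}:=(-R,R)\times (0,m)$ large enough to contain $x$ and $y$. The bulk terms involving $(k+i\epsilon)^2 n\,uv$ cancel, and the distributional sources $-\delta_x,\,-\delta_y$ yield
\begin{equation}
G_n^{\epsilon}(y,x)-G_n^{\epsilon}(x,y)=\int_{\partial D_{R,m}}\Bigl(v\,\partial_{\nu}u-u\,\partial_{\nu}v\Bigr)\,ds.
\end{equation}
The bottom piece vanishes by the Dirichlet condition on $\Gamma_0$. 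Since $u,v\in H^{1}(\mathbb{R}^2_{+})$ away from their singularities (and in fact $H^{2}_{\mathrm{loc}}$ by interior regularity), a Fubini-type argument applied to $\int_{0}^{\infty}\!\int_{\mathbb{R}}(|u|^2+|\nabla u|^2)\,dx_1\,dx_2<\infty$ and its analogue in $x_2$ produces sequences $R_j\to\infty$, $m_j\to\infty$ along which the $L^2$-norms of the traces of $u,v$ and $\nabla u,\nabla v$ on the three remaining pieces vanish; by Cauchy--Schwarz the right-hand side then tends to zero. Passing $\epsilon\to 0^{+}$ using the convergence established in the previous paragraph gives $G_n(x,y)=G_n(y,x)$.

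The main obstacle is the rigorous justification of the vanishing of the boundary integrals at infinity in the dissipative step: while $H^{1}$-integrability and the standard $\int_{R}^{R+1}g(t)\,dt\to 0$ extraction furnish sequences on which the $L^2$-traces of $u,v$ decay, doing the same for $\nabla u,\nabla v$ requires exploiting either a global $H^{2}(\mathbb{R}^2_{+})$ estimate---available here because of the dissipative sign $\mathrm{Im}(k+i\epsilon)^2>0$, which yields an exponential weighted energy estimate via Green's identity on $D_{R,m}$---or the exponential spatial decay of the Hankel function $H^{(1)}_{0}(\,\cdot\,)$ at complex argument. This is precisely the obstruction that motivates introducing the absorbing parameter $\epsilon$ rather than working with $G_n$ directly, since the $\psi^{\pm}$-weighted modes in Definition~2.4 do not decay in $x_1$.
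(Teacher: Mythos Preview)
Your proof is correct and follows the same overall strategy as the paper: pass to an absorbing wavenumber $k+i\epsilon$, apply Green's second identity on an expanding domain, use the $H^{1}(\mathbb{R}^{2}_{+})$-membership of the approximation to kill the far-field boundary terms, and then let $\epsilon\to 0^{+}$. The one notable difference is in how the approximate Green's function is built. The paper keeps the singular part at the \emph{real} wavenumber, writing $G_{n,\epsilon}(z,y)=u_{\epsilon}(z,y)+(1-\chi(|z-y|))G(z,y)$ with a cutoff $\chi$; this approximation satisfies the absorbing equation only up to a compactly supported $O(\epsilon)$ defect (the term $(2k\epsilon i-\epsilon^{2})n(1-\chi)G$), but on the far boundary $\partial B_{r}(0)\cap\mathbb{R}^{2}_{+}$ only the global $H^{1}$ piece $u_{\epsilon}$ is present, so the vanishing of the boundary integral needs no separate treatment of the free part. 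Your construction, taking $G^{\epsilon}$ itself at complex wavenumber, produces the exact absorbing Green's function (hence no extra bulk terms to carry through), at the price of having to handle $G^{\epsilon}$ at infinity as well---which, as you note, is harmless because $H_{0}^{(1)}((k+i\epsilon)\,\cdot\,)$ decays exponentially. The paper's choice trades a small bookkeeping cost in the bulk for a cleaner boundary step; yours does the reverse. One small caveat: Theorem~2.6 is stated for a \emph{fixed} source $f$, whereas your right-hand side $(k+i\epsilon)^{2}(1-n)G^{\epsilon}(\cdot,y)$ varies with $\epsilon$, so the convergence $\tilde{u}^{s,\epsilon}\to\tilde{u}^{s}$ strictly also requires the (easy) $L^{2}$-convergence $G^{\epsilon}\to G$ on $\mathrm{supp}(1-n)$ together with a uniform-in-$\epsilon$ bound on the absorbing solution operator.
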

\begin{proof}[{\bf Proof of Lemma 4.1}]
We take a small $\eta>0$ such that $B_{2\eta}(x) \cap B_{2\eta}(y) =\emptyset$ where $B_\epsilon(z)\subset \mathbb{R}^{2}$ is some open ball with center $z$ and radius $\epsilon >0$. We recall that $G_n(z,y)=G(z,y)+\tilde{u}^{s}(z,y)$ where $G(z,y)=\Phi_k(z,y)-\Phi_k(z,y^{*})$ and $\tilde{u}^{s}(z,y)$ is a radiating solution of the problem (\ref{1.5}) such that $\tilde{u}^{s}(z,y)=0$ for $z_2=0$. In Introduction of \cite{Kirsch and Lechleiter2} $\tilde{u}^{s}$ is given by $\tilde{u}^{s}(z,y)=u(z,y)-\chi(|z-y|)G(z,y)$ where $\chi \in C^{\infty}(\mathbb{R}_+)$ satisfying $\chi(t)=0$ for $0\leq t\leq \eta/2$ and $\chi(t)=1$ for $t\geq \eta$, and $u$ is a radiating solution such that $u=0$ on $\mathbb{R}\times \{0\}$ and 
\begin{equation}
\Delta u+k^2nu=f(\cdot,y)\ \mathrm{in} \ \mathbb{R}^2_{+}, \label{4.5}
\end{equation}
\begin{equation}
u=0 \ \mathrm{on} \ \mathbb{R}\times\{0\}, \label{4.6}
\end{equation}
where
\begin{equation}
f(\cdot,y):=\Bigl[k^2\bigl(1-n\bigr)\bigl(1-\chi(|\cdot-y|)\bigr)+\Delta \chi(|\cdot-y|) \Bigr]G(\cdot,y)+2\nabla \chi(|\cdot-y|)\cdot \nabla G(\cdot,y). \label{4.7}
\end{equation}
Then, we have $G_n(z,y)=u(z,y)+(1-\chi(|z-y|))G(z,y)$. By Theorem 2.6 we can take an solution $u_{\epsilon} \in H^{1}(\mathbb{R}^{2}_{+})$ of the problem (\ref{4.5})--(\ref{4.6}) replacing $k$ by $(k+i\epsilon)$ satisfying $u_{\epsilon}$ converges as $\epsilon \to +0$ in $H^{1}_{loc}(\mathbb{R}^2_{+})$ to $u$. We set $G_{n,\epsilon}(z,y):=u_{\epsilon}(z,y)+(1-\chi(|z-y|))G(z,y)$, and $G_{n,\epsilon}(z,y)$ converges as $\epsilon \to +0$ to $G(z,y)$  pointwise for $z \in \mathbb{R}^{2}_{+}$. By the simple calculation, we have
\begin{equation}
\bigl[\Delta_z + (k+i\epsilon)^2 n(z)\bigr]G_{n,\epsilon}(z,y)=-\delta(z,y)+(2k\epsilon i-\epsilon^2)n(z)\bigl(1-\chi(|z-y|)\bigr)G(z,y).\label{4.8}
\end{equation}
\par
Let $r>0$ be large enough such that $x,y \in B_r(0)$. By Green's second theorem in $B_r(0)\cap \mathbb{R}^{2}_+$ we have
\begin{eqnarray}
\lefteqn{-G_{n,\epsilon}(y,x)+(2k\epsilon i-\epsilon^2)\int_{ B_{2\eta}(y)}u_{\epsilon}(z,x)n(z)(1-\chi(|z-y|))G(z,y)dz}
\nonumber\\
&+&G_{n,\epsilon}(x,y)-(2k\epsilon i-\epsilon^2)\int_{ B_{2\eta}(x)}u_{\epsilon}(z,y)n(z)(1-\chi(|z-x|))G(z,x)dz
\nonumber\\
&=&\int_{B_r(0)\cap \mathbb{R}^{2}_+} G_{n,\epsilon}(z,x)\bigl[\Delta_z+(k+i\epsilon)^2n(z)\bigr]G_{n,\epsilon}(z,y)dz
\nonumber\\
&-&\int_{B_r(0)\cap \mathbb{R}^{2}_+}G_{n,\epsilon}(z,y)\bigl[\Delta_z+(k+i\epsilon)^2n(z)\bigr]G_{n,\epsilon}(z,x)dz
\nonumber\\
&=&\int_{\partial B_r(0)\cap \mathbb{R}^{2}_+} u_{\epsilon}(z,x)\frac{\partial u_{\epsilon}(z,y)}{\partial \nu_z}-u_{\epsilon}(z,y)\frac{\partial u_{\epsilon}(z,x)}{\partial \nu_z}ds(z).
\label{4.9} 
\end{eqnarray}
Since $u_{\epsilon} \in H^{1}(\mathbb{R}^2_+)$, the right hand side of (\ref{4.9}) converges as $r \to \infty$ to zero. Then, as $r \to \infty$ in (\ref{4.9}) we have
\begin{eqnarray}
\lefteqn{G_{n,\epsilon}(x,y)-G_{n,\epsilon}(y,x)}
\nonumber\\
&=&(2k\epsilon i-\epsilon^2)\int_{B_{2\eta}(x)}u_{\epsilon}(z,y)n(z)(1-\chi(|z-x|))G(z,x)dz
\nonumber\\
&-&(2k\epsilon i-\epsilon^2)\int_{B_{2\eta}(y)}u_{\epsilon}(z,x)n(z)(1-\chi(|z-y|))G(z,y)dz
\label{4.10} 
\end{eqnarray}
Since $u_{\epsilon}$ converges as $\epsilon \to +0$ in $H^{1}_{loc}(\mathbb{R}^2_{+})$ to $u$, the right hand side of (\ref{4.10}) converges to zero as $\epsilon \to +0$. Therefore, we conclude that $G_{n}(x,y)=G_{n}(y,x)$ for $x\neq y$.
\end{proof}
By the symmetricity of $G_n$,
\begin{eqnarray}
\langle Hg, f \rangle
&=&\int_{Q} \bigl\{ \sqrt{|n(x)q(x)|}\int_{M}\overline{G_n(x,y)}g(y)ds(y) \bigr\}\overline{f(x)}dx
\nonumber\\
&=& \int_{M} g(y) \overline{ \bigl\{ \int_{Q}\sqrt{|n(x)q(x)|}G_n(x,y)f(x)ds(x) \bigr\} }ds(y)
\nonumber\\
&=& \int_{M} g(y) \overline{\bigl\{ \int_{Q}\sqrt{|n(x)q(x)|}G_n(y,x)f(x)ds(x) \bigr\}}ds(y), \ \ \ \ 
\label{4.11}
\end{eqnarray}
which implies that 
\begin{equation}
H^{*}f(x)=\int_{Q}\sqrt{|n(y)q(y)|}G_n(x,y)f(y)ds(y), \ x \in M.\label{4.12}
\end{equation}
We define $T:L^{2}(Q) \to L^{2}(Q)$ by $Tf:=\frac{|nq|}{k^2nq}f-\sqrt{|nq|}w$ where $w$ satisfies the radiation condition and 
\begin{equation}
\Delta w+k^2nw=-\sqrt{|nq|}f, \ \mathrm{in} \ \mathbb{R}^2_{+}, \label{4.13}
\end{equation}
\begin{equation}
v=0 \ \mathrm{on} \ \mathbb{R}\times \{0\}. \label{4.14}
\end{equation} 
We will show the following integral representation of $w$.
\begin{lem}
\begin{equation}
w(x)=\int_{Q}\sqrt{|n(y)q(y)|}G_n(x,y)f(y)dy, \ x \in \mathbb{R}^2_{+}.\label{4.15}
\end{equation}
\end{lem}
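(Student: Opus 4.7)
The plan is to set $\tilde{w}(x):=\int_{Q}\sqrt{|n(y)q(y)|}\,G_n(x,y)f(y)\,dy$ and show that $\tilde{w}$ solves exactly the same boundary value problem with radiation condition as $w$; then uniqueness (the last statement of Theorem 2.6, inherited via $w-\tilde{w}$) forces $w=\tilde{w}$.

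First I would verify the PDE and the Dirichlet condition. Because $G_n(\cdot,y)$ is the Dirichlet Green's function for $\Delta+k^2n$ on $\mathbb{R}^2_+$, one has $(\Delta_x+k^2n(x))G_n(x,y)=-\delta_y(x)$ in the distributional sense, with $G_n(\cdot,y)=0$ on $\Gamma_0$. Viewing $\tilde{w}$ as a volume potential with density $\sqrt{|nq|}f\in L^2(Q)$ compactly supported in $Q\subset W$, and recalling the decomposition $G_n=G+\tilde{u}^s$ where $G$ is the free half-space Green's function with a weak logarithmic singularity and $\tilde{u}^s(\cdot,y)\in H^1_{loc}(\mathbb{R}^2_+)$, one obtains $(\Delta+k^2n)\tilde{w}=-\sqrt{|nq|}f$ in the variational sense, while $\tilde{w}=0$ on $\Gamma_0$ is inherited pointwise from the zero Dirichlet trace of $G_n(\cdot,y)$.

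Next I would check the radiation condition for $\tilde{w}$ in the sense of Definition 2.4. Each $G_n(\cdot,y)$ with $y\in Q$ satisfies this radiation condition: for $\tilde{u}^s(\cdot,y)$ it is precisely the content of Theorem 2.6 applied to $f=k^2(1-n)G(\cdot,y)$, while the free term $G(\cdot,y)$ satisfies the upward propagating condition (\ref{2.4}) and carries no waveguide modes. Because the radiation condition is a linear requirement (an upward propagating representation plus a decomposition $u^{(1)}+u^{(2)}$ with coefficients $a_{l,j}$ depending linearly on the source), and because $Q$ is compact with $\sqrt{|nq|}f\in L^2(Q)$, the superposition in $y$ preserves this structure once we can interchange it with the bounded linear operations involved.

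The main obstacle is the rigorous justification of that interchange, in particular showing that the asymptotic behavior encoded by $\psi^{\pm}$ and the finite-dimensional waveguide mode decomposition commute with integration against a bounded $L^2$ density over $Q$. The cleanest route is to invoke the limiting absorption principle from Theorem 2.6: for $\epsilon>0$ the absorbing problem has $w_\epsilon\in H^1(\mathbb{R}^2_+)$ and, using the perturbed Green's function $G_{n,\epsilon}$ built in the proof of Lemma 4.1, one obtains by Green's representation in $B_r(0)\cap\mathbb{R}^2_+$ (as in (\ref{4.9})–(\ref{4.10}), with boundary contributions vanishing as $r\to\infty$ thanks to $u_\epsilon\in H^1$) the identity $w_\epsilon(x)=\int_Q\sqrt{|nq|}\,G_{n,\epsilon}(x,y)f(y)\,dy$. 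Passing to the limit $\epsilon\to+0$ using $H^1_{loc}$-convergence for $w_\epsilon\to w$ and pointwise convergence $G_{n,\epsilon}(x,\cdot)\to G_n(x,\cdot)$ with a dominating function on $Q$, one arrives at the desired representation (\ref{4.15}), completing the proof.
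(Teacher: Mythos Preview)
Your proposal is correct and, after briefly exploring a direct ``verify-and-invoke-uniqueness'' route, converges in the final paragraph to exactly the argument the paper gives: represent $w_\epsilon$ via Green's second theorem in $B_r(0)\cap\mathbb{R}^2_+$ against $G_{n,\epsilon}$, let $r\to\infty$ using $u_\epsilon,w_\epsilon\in H^1(\mathbb{R}^2_+)$ to kill the boundary term, and then pass $\epsilon\to+0$. The only detail you gloss over is that $G_{n,\epsilon}$ is not an exact Green's function for $\Delta+(k+i\epsilon)^2n$ (it produces an extra $O(\epsilon)$ source term as in (\ref{4.8})), so an additional term like $(2k\epsilon i-\epsilon^2)\int_{B_{2\eta}(x)}w_\epsilon\,n(1-\chi)G\,dy$ appears and must be shown to vanish in the limit; once you add that, your argument and the paper's coincide.
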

\begin{proof}[{\bf Proof of Lemma 4.2}]
Let $w_{\epsilon} \in H^{1}_{loc}(\mathbb{R}^2_+)$ be a solution of the problem (\ref{4.13})--(\ref{4.14}) replacing $k$ by $(k+i \epsilon)$ satisfying $w_{\epsilon}$ converges as $\epsilon \to +0$ in $H^{1}_{loc}(\mathbb{R}^{2}_+)$ to $w$. Let $G_{n,\epsilon}(y,x)$ be an approximation of the Green's function $G_n(y,x)$ as same as in Lemma 4.1. Let $r>0$ be large enough such that $x \in B_r(0)$. By Green's second theorem in $B_r(0) \cap \mathbb{R}^{2}_+$ we have
\begin{eqnarray}
\lefteqn{-w_{\epsilon}(x)+(2k\epsilon i-\epsilon^2)\int_{B_{2\eta}(x)}w_{\epsilon}(y)n(y)(1-\chi(|y-x|))G(y,x)dy}
\nonumber\\
&+&\int_{Q}\sqrt{|n(y)q(y)|}G_{n,\epsilon}(y,x)f(y)dy
\nonumber\\
&=&\int_{B_r(0)\cap \mathbb{R}^{2}_+}w_{\epsilon}(y)\bigl[\Delta_y+(k+i\epsilon)^2n(y)\bigr]G_{n,\epsilon}(y,x)dy
\nonumber\\
&-&\int_{B_r(0)\cap \mathbb{R}^{2}_+}G_{n,\epsilon}(y,x)\bigl[\Delta_y+(k+i\epsilon)^2n(y)\bigr]w_{\epsilon}(y)dz
\nonumber\\
&=&\int_{\partial B_r(0)\cap \mathbb{R}^{2}_+} w_{\epsilon}(y)\frac{\partial u_{\epsilon}(y,x)}{\partial \nu_y}-u_{\epsilon}(y,x)\frac{\partial w_{\epsilon}(y)}{\partial \nu_y}ds(y).
\label{4.16} 
\end{eqnarray}
Since $u_{\epsilon}$, $w_{\epsilon}$ $\in H^{1}(\mathbb{R}^2_+)$, the right hand side of (\ref{4.16}) converges as $r \to \infty$ to zero. Then, as $r \to \infty$ in (\ref{4.16}) we have
\begin{eqnarray}
w_{\epsilon}(x)&=&(2k\epsilon i-\epsilon^2)\int_{B_{2\eta}(x)}w_{\epsilon}(y)n(y)(1-\chi(|y-x|))G(y,x)dy
\nonumber\\
&+&\int_{Q}\sqrt{|n(y)q(y)|}G_{n,\epsilon}(y,x)f(y)dy
\label{4.17} 
\end{eqnarray}
The first term of right hand side in (\ref{4.17}) converges to zero as $\epsilon \to +0$, and the second term converges to $\int_{Q}\sqrt{|n(y)q(y)|}G_{n}(y,x)f(y)dy$ as $\epsilon \to +0$. As $\epsilon \to +0$ in (\ref{4.17}) and by the symmetricity of $G_n$ (Lemma 4.1) we conclude (\ref{4.15}). 
\end{proof}
Since $w$ satisfies
\begin{eqnarray}
\Delta w+k^2(1+q)nw
&=&-k^2\frac{nq}{\sqrt{|nq|}}\bigl\{ \frac{|nq|}{k^2nq}f-\sqrt{|nq|}w \bigr\} \ \mathrm{in} \ \mathbb{R}^2_{+}
\nonumber\\
&=&-k^2\frac{nq}{\sqrt{|nq|}}Tf
, \label{4.18}
\end{eqnarray}
we have $w\bigl|_{M}=LTf$. Therefore, by (\ref{4.12}) and (\ref{4.15}) we have $H^{*}=LT$. Then, we have the following symmetric factorization:
\begin{equation}
N=LT^{*}L^{*}.\label{4.19}
\end{equation}
We will show the following lemma corresponding to the assumptions in Theorem 3.1.
\begin{lem}
\begin{description}
\item[(a)] $L$ is compact with dense range in $L^{2}(M)$.

\item[(b)]If there exists the constant $q_{min}>0$ such that $q_{min}\leq q$ a.e. in $Q$, then  $\mathrm{Re}T$ has the form $\mathrm{Re}T=C+K$ with some self-adjoint and positive coercive operator $C$ and some compact operator $K$ on $L^{2}(Q)$.

\item[(c)]$\mathrm{Im} \langle f, T f \rangle \geq 0$ for all $f \in L^{2}(Q)$.

\item[(d)]
$T$ is injective.
\end{description}
\end{lem}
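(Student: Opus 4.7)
The plan is to verify the four assertions by combining the symmetric factorization $N=LT^{*}L^{*}$, the integral representations of Lemmas 4.1--4.2, the radiation condition (Definition 2.4), and the well-posedness statements (Theorems 2.6 and 2.8). For (a), since $M$ lies strictly above $W$ and is disjoint from $Q$, the coefficient $(1+q)n$ coincides with $n$ and the source in (\ref{4.1}) vanishes on a bounded neighbourhood $V$ of $M$ with $V\cap Q=\emptyset$; interior elliptic regularity gives $\left\|v\right\|_{H^{2}(V)}\leq C\left\|f\right\|_{L^{2}(Q)}$ and the Rellich embedding $H^{2}(V)\hookrightarrow L^{2}(M)$ yields compactness of $L$. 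For density of $\mathrm{Ran}(L)$ I verify $\mathrm{Ker}(L^{*})=\{0\}$: denoting by $G_{n}^{q}$ the Green's function for $\Delta+k^{2}(1+q)n$ (Dirichlet, radiating, which exists and is symmetric by the same argument as Theorem 2.8 and Lemma 4.1), one computes $L^{*}g(y)$ on $Q$ as a non-vanishing multiple of the single-layer-type potential $\varphi(y):=\int_{M}\overline{G_{n}^{q}(x,y)}g(x)\,ds(x)$. If $\varphi$ vanishes on $Q$, then since $\varphi$ solves the homogeneous perturbed equation off $M$ and satisfies the Dirichlet condition on $\Gamma_{0}$, unique continuation on the connected open set $\mathbb{R}^{2}_{+}\setminus M$ forces $\varphi\equiv 0$ there, and the jump of the normal derivative of $\varphi$ across $M$ then gives $g=0$.

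For (b), the hypothesis $q\geq q_{min}>0$ gives $nq>0$, hence $|nq|/(k^{2}nq)=k^{-2}$ and $Tf=k^{-2}f-\sqrt{nq}\,w_{f}$. Substituting the integral representation of Lemma 4.2 and using the symmetry of $G_{n}$ from Lemma 4.1, the self-adjoint part collapses to
\[
(\mathrm{Re}\,T)\,f(x)=k^{-2}f(x)-\int_{Q}\sqrt{n(x)q(x)\,n(y)q(y)}\,\mathrm{Re}\,G_{n}(x,y)\,f(y)\,dy,
\]
whose kernel is symmetric and real-valued. Since $\mathrm{Re}\,\Phi_{k}=-\tfrac{1}{4}Y_{0}(k\,|\cdot|)$ has only a logarithmic diagonal singularity while the remaining pieces of $G_{n}$ are smoother, the kernel lies in $L^{2}(Q\times Q)$; the integral operator $K$ it defines is therefore Hilbert--Schmidt and self-adjoint, and $C:=k^{-2}I$ supplies the required self-adjoint positive coercive part. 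For (d), if $Tf=0$ then $f=k^{2}(nq/\sqrt{|nq|})w_{f}$ on $Q$, and substituting into the defining equation for $w_{f}$ produces $\Delta w_{f}+k^{2}(1+q)nw_{f}=0$ in $\mathbb{R}^{2}_{+}$ with $w_{f}=0$ on $\Gamma_{0}$ and the radiation condition; Assumption 2.7 together with Theorem 2.8 then forces $w_{f}\equiv 0$ and hence $f\equiv 0$.

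The main obstacle will be (c), which I handle by limiting absorption. For $\epsilon>0$ let $w_{\epsilon}\in H^{1}(\mathbb{R}^{2}_{+})$ be the solution of (\ref{4.13})--(\ref{4.14}) with $k$ replaced by $k+i\epsilon$, so that $w_{\epsilon}\to w_{f}$ in $H^{1}_{loc}(\mathbb{R}^{2}_{+})$ by Theorem 2.6. Testing the absorptive equation against $\overline{w_{\epsilon}}$ is legitimate since $w_{\epsilon}\in H^{1}(\mathbb{R}^{2}_{+})$ vanishes on $\Gamma_{0}$; taking imaginary parts with $\mathrm{Im}(k+i\epsilon)^{2}=2k\epsilon$ produces
\[
\mathrm{Im}\int_{Q}\sqrt{|nq|}\,f\,\overline{w_{\epsilon}}\,dx=-2k\epsilon\int_{\mathbb{R}^{2}_{+}}n\,|w_{\epsilon}|^{2}\,dx\leq 0.
\]
Since $f$ is supported in the bounded set $Q$ and $w_{\epsilon}\to w_{f}$ in $L^{2}(Q)$, the left-hand side converges as $\epsilon\to+0$ to $\mathrm{Im}\int_{Q}\sqrt{|nq|}\,f\,\overline{w_{f}}\,dx\leq 0$; with the convention $\langle f,g\rangle=\int_{Q}f\,\overline{g}\,dx$ one then computes from the definition of $T$ that $\mathrm{Im}\langle f,Tf\rangle=-\mathrm{Im}\int_{Q}\sqrt{|nq|}\,f\,\overline{w_{f}}\,dx\geq 0$. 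The delicate point is that $\epsilon\int n|w_{\epsilon}|^{2}$ need not vanish in the limit --- it encodes the total power radiated into the Rayleigh and guided modes --- but only the sign of the limit of the left-hand side is needed, and that is read off directly from the absorptive identity.
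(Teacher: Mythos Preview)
Your proof is correct, and parts (b) and (d) agree with the paper's. The two substantive differences are in (a) (density) and in (c).

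For the density of $\mathrm{Ran}(L)$, the paper does not compute $L^{*}$ directly. Instead it exploits the identity $H^{*}=LT$ established just before the lemma: once (b) and (d) are known, $T$ is Fredholm of index zero and injective, hence bijective, so $\mathrm{Ker}(L^{*})=\{0\}$ reduces to $\mathrm{Ker}(H)=\{0\}$, and the latter is proved via the unperturbed Green's function $G_{n}$ and the same unique-continuation/jump argument you use. Your route is logically cleaner in that it does not rely on (b) and (d), but it costs you the construction and symmetry of the perturbed Green's function $G_{n}^{q}$, which the paper never needs.

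For (c) the difference is more interesting. The paper works directly with the radiating solution $w$: it writes $\mathrm{Im}\langle f,Tf\rangle=\mathrm{Im}\int_{\Omega_{N}}\overline{w}\,\Delta w\,dx$, integrates by parts on the expanding rectangles $\Omega_{N}=(-N,N)\times(0,N^{s})$, and controls the boundary terms through the explicit guided-mode decomposition in Definition~2.4, obtaining the quantitative lower bound
\[
\mathrm{Im}\langle f,Tf\rangle\;\geq\;\frac{k}{2\pi}\sum_{j\in J}\Bigl[\sum_{d_{l,j}>0}|a_{l,j}|^{2}d_{l,j}-\sum_{d_{l,j}<0}|a_{l,j}|^{2}d_{l,j}\Bigr]\;\geq\;0.
\]
Your limiting-absorption argument sidesteps the mode analysis entirely: because $w_{\epsilon}\in H^{1}(\mathbb{R}^{2}_{+})$, testing with itself is legitimate, the sign is immediate, and only the $L^{2}(Q)$-convergence $w_{\epsilon}\to w$ (which follows from Theorem~2.6 since $Q$ is bounded) is needed to pass to the limit. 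This is shorter and does not invoke \cite{Furuya1}; what you lose is the identification of $\mathrm{Im}\langle f,Tf\rangle$ with the radiated energy in the propagating modes, which the paper's computation makes explicit but which is not required for the lemma as stated.
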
 
\begin{proof}[{\bf Proof of Lemma 4.3}]
{\bf(d)} Let $f \in L^{2}(Q)$ and $Tf=0$, i.e., $\frac{|nq|}{k^2nq}f=\sqrt{|nq|}w$ where $w$ satisfies (\ref{4.13})--(\ref{4.14}). Then, $\Delta w+k^2n(1+q)w=0$. By the uniqueness, $w=0$ in $\mathbb{R}^2_{+}$ which implies that $f=0$. Therefore $T$ is injective. 
\par
{\bf(b)} Since $n$ and $q$ are bounded below (that is, $n\geq n_{min}>0$ and $q\geq q_{min}>0$), $T$ has the form $T=C+K$ where $K$ is some compact operator and $C$ is some self-adjoint and positive coercive operator. Furthermore, from the injectivity of $T$ we obtain that $T$ is bijective. 
\par
{\bf(a)} By the trace theorem and $v \in H^{1}_{loc}(\mathbb{R}^2_{+})$, $Lf=v\bigl|_{M} \in H^{1/2}(M)$, which implies that $L:L^{2}(Q)\to L^{2}(M)$ is compact. 
\par
By the bijectivity of $T$ and $H=T^{*}L^{*}$, it is sufficient to show the injectivity of $H$. Let $g \in L^{2}(M)$ and $Hg(x)=\sqrt{|n(x)q(x)|}\int_{M}\overline{G_n(x,y)}g(y)ds(y)=0$ for $\ x \in Q$. We set $v(x):=\int_{M}\overline{G_n(x,y)}g(y)ds(y)$. By the definition of $v$ we have
\begin{equation}
\Delta v+k^2nv=0, \ \mathrm{in} \ \mathbb{R}^2_{+}\setminus M , \label{4.20}
\end{equation}
and since $q$ are bounded below, $v=0$ in $Q$. By unique continuation principle we have $v=0$ in $\mathbb{R}^2_{+}\setminus M$. By the jump relation (see \cite{McLean}) we have $0=\frac{\partial v_+}{\partial \nu}-\frac{\partial v_-}{\partial \nu}=g$, which conclude that the operator $H$ is injective.
\par
{\bf(c)}
For the proof of (c) we refer to Theorem 3.1 in \cite{Furuya1}. By the definition of $T$ we have 
\begin{equation}
\mathrm{Im}\langle f, Tf \rangle=-\mathrm{Im}\int_{Q}f\sqrt{|nq|}\overline{w}dx=\mathrm{Im}\int_{Q}\overline{w}[\Delta+k^2n]wdx,\label{4.21}
\end{equation}
where $w$ is a radiating solution of the problem (\ref{4.13})--(\ref{4.14}). We set $\Omega_N:=(-N,N) \times (0, N^s)$ where $s>0$ is small enough and $N>0$ is large enough. By the same argument in Theorem 3.1 of \cite{Furuya1} we have
\begin{eqnarray}
\lefteqn{\mathrm{Im}\langle f, Tf \rangle=\mathrm{Im}\int_{\Omega_N}\overline{w}[\Delta+k^2n]wdx=\mathrm{Im}\int_{\Omega_N}\overline{w}\Delta wdx}
\nonumber\\
&\geq&
\Biggl[\frac{1}{2\pi} \sum_{j \in J} \sum_{d_{l,j},d_{l',j}>0}\overline{a_{l,j}}a_{l',j}\int_{C_{\phi(N)}}\overline{\phi_{l,j}}\frac{\partial \phi_{l',j}}{\partial x_1}dx \Biggr]
\nonumber\\
&-&\mathrm{Im}\Biggl[\frac{1}{2\pi} \sum_{j \in J}\sum_{d_{l,j},d_{l',j}<0}\overline{a_{l,j}}a_{l',j}\int_{C_{\phi(N)}}\overline{\phi_{l,j}}\frac{\partial \phi_{l',j}}{\partial x_1}dx\Biggr]+o(1),\label{4.22}
\end{eqnarray}
where where some $a_{l,j} \in \mathbb{C}$, and $\{d_{l,j},\phi_{l,j}: l=1,...,m_j \}$ are normalized eigenvalues and eigenfunctions of the problem (\ref{2.8}). By Lemmas 6.3 and 6.4 of \cite{Kirsch and Lechleiter2}, as $N\to \infty$ in (\ref{4.22}) we have 
\begin{equation}
\mathrm{Im}\langle f, Tf \rangle \geq \frac{k}{2\pi} \sum_{j \in J} \Biggl[ \sum_{d_{l,j}>0}|a_{l,j}|^2d_{l,j} -\sum_{d_{l,j}<0}|a_{l,j}|^2d_{l,j}\Biggr]\geq 0,\label{4.23}
\end{equation}
which concludes (c).
\end{proof}
\begin{rem}
The strictly positivity of $\mathrm{Im}T$ is missing in Lemma 4.3 although we have the injectivity of $T$. From the viewpoint of Section 3, the assumption of transmission eigenvalue of $Q$ can be expected when we apply Theorem 3.1 to this case. However, even with its assumption the author of this paper do not have the idea to show $\mathrm{Im}T>0$.
\end{rem}
In order to show Theorems 1.1 and 1.2, we consider another factorization of the near field operator $N$. We define $\tilde{T}:L^{2}(Q)\to L^{2}(Q)$ by $\tilde{T}v:=k^2\frac{nq}{|nq|}g-k^2\frac{nq}{\sqrt{|nq|}}v$ where $v$ satisfies the radiation condition and 
\begin{equation}
\Delta v+k^2(1+q)nv=-k^2\frac{nq}{\sqrt{|nq|}}g, \ \mathrm{in} \ \mathbb{R}^2_{+}, \label{4.24}
\end{equation}
\begin{equation}
v=0 \ \mathrm{on} \ \mathbb{R}\times \{0\}. \label{4.25}
\end{equation} 
Then, by the definition of $T$ and $\tilde{T}$ we can show that $\tilde{T}T=I$ and $T\tilde{T}=I$, which implies that $T^{-1}=\tilde{T}$. Therefore, we have by $L=H^{*}T^{-1}$
\begin{equation}
N=LT^{*}L^{*}=H^{*}T^{-1}H=H^{*}\tilde{T}H=H^{*}_{Q}\hat{T}H_{Q}, \label{4.26}
\end{equation}
where $H_{Q}:L^{2}(M)\to L^{2}(Q)$ is defined by 
\begin{equation}
H_{Q}g(x):=\int_{M}\overline{G_n(x,y)}g(y)ds(y), \ x \in Q, \label{4.27}
\end{equation} 
and $\hat{T}:L^{2}(Q)\to L^{2}(Q)$ is defined by $\hat{T}f=k^2nqf+k^2nqw$ where $w$ satisfies the radiation condition and 
\begin{equation}
\Delta w+k^2(1+q)nw=-k^2nqf, \ \mathrm{in} \ \mathbb{R}^2_{+}, \label{4.28}
\end{equation}
\begin{equation}
w=0 \ \mathrm{on} \ \mathbb{R}\times \{0\}. \label{4.29}
\end{equation}
We will show the following lemma.
\begin{lem}
Let $B$ and $Q$ be a bounded open set in $\mathbb{R}^{2}_+$.
\begin{description}
\item[(a)] $\mathrm{dim}(\mathrm{Ran}(H^{*}_B))=\infty$.

\item[(b)]If $B \cap Q =\emptyset$, then $\mathrm{Ran}(H^{*}_B) \cap \mathrm{Ran}(H^{*}_Q)=\{0 \}$.
\end{description}
\end{lem}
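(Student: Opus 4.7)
For (a), the plan is to reduce the claim to the injectivity of $H_B : L^2(M) \to L^2(B)$. Once injectivity is established, duality gives $\overline{\mathrm{Ran}(H_B^*)} = \mathrm{Ker}(H_B)^\perp = L^2(M)$, which is infinite-dimensional; since any finite-dimensional subspace of $L^2(M)$ is closed and hence would coincide with its closure, $\mathrm{Ran}(H_B^*)$ itself must be infinite-dimensional. To prove injectivity of $H_B$, suppose $H_B g = 0$ and consider the layer potential
\[
v(x) := \int_M \overline{G_n(x,y)}\, g(y)\, ds(y), \quad x \in \mathbb{R}^2_+ \setminus M.
\]
Then $\Delta v + k^2 n v = 0$ off $M$, $v = 0$ on $\Gamma_0$, and by hypothesis $v \equiv 0$ on $B$. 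The set $\mathbb{R}^2_+ \setminus M$ is connected (as $M$ is a bounded interior segment), so weak unique continuation gives $v \equiv 0$ on $\mathbb{R}^2_+ \setminus M$. The standard single-layer jump relation across $M$ then forces $g = 0$, exactly as in the argument for $H$ used in Lemma 4.3(a).

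For (b), suppose $\phi \in \mathrm{Ran}(H_B^*) \cap \mathrm{Ran}(H_Q^*)$ with $\phi = H_B^* f = H_Q^* g$, and introduce the volume potentials
\[
v_B(x) := \int_B G_n(x,y) f(y)\, dy, \qquad v_Q(x) := \int_Q G_n(x,y) g(y)\, dy.
\]
By the defining property of $G_n$ (compare Lemma 4.2), these are the unique radiating solutions of $\Delta v + k^2 n v = -f\chi_B$ and $\Delta v + k^2 n v = -g\chi_Q$ on $\mathbb{R}^2_+$ with zero Dirichlet data on $\Gamma_0$, and they satisfy $v_B|_M = v_Q|_M = \phi$. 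Hence $u := v_B - v_Q$ is radiating with $u|_M = 0$. The first step is to show $u \equiv 0$ on $\mathbb{R}^2_+ \setminus \overline{B \cup Q}$. In $\{x_2 > h\}$, $n \equiv 1$ makes $u$ real-analytic; the vanishing of $u$ on the segment $M$ extends by analyticity in $x_1$ to $u \equiv 0$ on the entire line $\Gamma_m$, and uniqueness of the upward-radiating problem in $\{x_2 > m\}$ gives $u \equiv 0$ for $x_2 > m$; weak unique continuation in $\{x_2 > h\}$ then propagates this to all of $\{x_2 > h\}$. Since $B, Q$ lie strictly below $\Gamma_h$, $u$ is $H^2$ across $\Gamma_h$, carrying zero Cauchy data there, and a final weak unique continuation in $W \setminus \overline{B \cup Q}$ (connected by the standing assumption on $\mathbb{R}^2 \setminus \overline{Q}$ together with $B \cap Q = \emptyset$) yields $u \equiv 0$ on $\mathbb{R}^2_+ \setminus \overline{B \cup Q}$.

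The second step shows that $v_B$ has compact support in $\overline{B}$. Inside $B$ one has $g\chi_Q = 0$, so $(\Delta + k^2 n) u = -f$, and $v_B, v_Q \in H^2_{\mathrm{loc}}$ yields $u \in H^2_{\mathrm{loc}}$; the exterior vanishing of $u$ thus forces zero trace and zero normal derivative of $u$ on $\partial B$ from the interior as well. Extending $u|_B$ by zero produces $w \in H^2(\mathbb{R}^2_+)$ with $\mathrm{supp}(w) \subset \overline{B}$, $w = 0$ on $\Gamma_0$, and $\Delta w + k^2 n w = -f\chi_B$; compactness of support makes $w$ trivially satisfy the radiation condition of Definition~2.4. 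Uniqueness (Theorem 2.6) applied to the source $-f\chi_B$ gives $w = v_B$, so $v_B$ is supported in $\overline{B}$; since $M \cap \overline{B} = \emptyset$, we conclude $\phi = v_B|_M = 0$, which proves (b).

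I expect the principal obstacle to lie in the first step of (b): upgrading the vanishing of $u$ on the bounded segment $M$ to vanishing on the full exterior $\mathbb{R}^2_+ \setminus \overline{B \cup Q}$. This requires carefully stringing together real-analyticity in $\{x_2 > h\}$, Rellich-type uniqueness above $\Gamma_m$, weak unique continuation in $\{x_2 > h\}$ and in $W$, $H^2$-regularity across $\Gamma_h$, and a connectedness hypothesis for $\mathbb{R}^2_+ \setminus \overline{B \cup Q}$.
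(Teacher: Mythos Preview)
Your proof is correct and follows essentially the same route as the paper. Part (a) is identical: the paper also reduces to injectivity of $H_B$ via the same layer-potential/unique-continuation/jump-relation argument used for $H$ in Lemma~4.3(a). For (b), the paper likewise introduces the volume potentials $v_B,v_Q$, observes $v_B|_M=v_Q|_M$, and invokes ``Rellich lemma and unique continuation'' to get $v_B=v_Q$ on $\mathbb{R}^2_+\setminus(\overline{B}\cup\overline{Q})$; you simply spell this step out in more detail. The only organizational difference is the final step: the paper glues $v_Q$ (inside $B$), $v_B$ (inside $Q$), and their common value outside into a single $v\in H^1_{loc}$ satisfying the homogeneous radiating problem, and then applies uniqueness once to get $v=0$; you instead extend $u|_B$ by zero and match it to $v_B$ via uniqueness for the source $-f\chi_B$. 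Both conclusions are equivalent and standard; the paper's symmetric gluing is marginally shorter, while your version makes the $H^2$ matching across $\partial B$ explicit.
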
 
\begin{proof}[{\bf Proof of Lemma 4.4}]
{\bf (a)} By the same argument of the injectivity of $H$ in (a) of Lemma 4.3, we can show that $H_B$ is injective. Therefore, $H^{*}_B$ has dense range.
\par
{\bf (b)} Let $h \in \mathrm{Ran}(H^{*}_B) \cap \mathrm{Ran}(H^{*}_Q)$. Then, there exists $f_B$, $f_Q$ suct that $h=H^{*}_Bf_B=H^{*}_Qf_Q$. We set
\begin{equation}
v_B(x):=\int_BG_n(x,y)f_B(y)dy, \ x \in \mathbb{R}^2_+\label{4.30}
\end{equation} 
\begin{equation}
v_Q(x):=\int_QG_n(x,y)f_Q(y)dy, \ x \in \mathbb{R}^2_+\label{4.31}
\end{equation} 
then, $v_B$ and $v_Q$ satisfies $\Delta v_B+k^2nv_B=-f_B$, and $\Delta v_Q+k^2nv_Q=-f_Q$, respectivelty, and $v_B=v_Q$ on $M$. By Rellich lemma and unique continuation we have $v_B=v_Q$ in $\mathbb{R}^2_{+}\setminus(\overline{B\cap Q})$. Hence, we can define $v \in H^{1}_{loc}(\mathbb{R}^2)$ by
\begin{eqnarray}
v:=\left\{ \begin{array}{ll}
v_B=v_Q & \quad \mbox{in $\mathbb{R}^2_{+}\setminus (\overline{B \cap Q})$}  \\
v_{B} & \quad \mbox{in $Q$} \\
v_{Q} & \quad \mbox{in $B$} \\
\end{array} \right.\label{4.32}
\end{eqnarray}
and $v$ is a radiating solution such that $v=0$ for $x_2=0$ and
\begin{equation}
\Delta v+k^2nv=0 \ \mathrm{in} \ \mathbb{R}^2_+. \label{4.33}
\end{equation}
By the uniquness, we have $v=0 \ \mathrm{in} \ \mathbb{R}^2$, which implies that $h=0$.   
\end{proof}
In the following sections we will show Theorems 1.1 and 1.2 by using properties of the factorization of the near field operator $N$.
\section{Proof of Theorem 1.1}
In Section 5, we will show Theorem 1.1. Let $B \subset Q$. We define $K:L^{2}(Q)\to L^{2}(Q)$ by $Kf:=k^2nqw$ where $w$ is a radiating solution of the problem (\ref{4.28})--(\ref{4.29}). Since $w\bigl|_Q \in H^{1}(Q)$, $K$ is a compact operator. Let $V$ be the sum of eigenspaces of $\mathrm{Re}K$ associated to eigenvalues less than $\alpha-k^2n_{min}q_{min}$. Since $\alpha-k^2n_{min}q_{min}<0$, then $V$ is a finite dimensional and for $H_Qg\in V^{\bot}$
\begin{eqnarray}
\langle \mathrm{Re}Ng, g \rangle&=&\int_{Q}k^2nq|H_Qg|^2
dx+\langle (\mathrm{Re}K)H_Qg, H_Qg \rangle\nonumber\\
&\geq&
k^2n_{min}q_{min}\left\| H_Qg \right\|^{2}+(\alpha-k^2n_{min}q_{min})\left\| H_Qg \right\|^{2}
\nonumber\\
&\geq&\alpha\left\| H_Qg \right\|^{2} \geq \alpha\left\| H_Bg \right\|^{2}
\label{5.1}
\end{eqnarray}
Since for $g \in L^{2}(M)$
\begin{equation}
H_Qg \in V^{\bot} \ \ \ \  \Longleftrightarrow \ \ \ \ g \in (H^{*}_QV)^{\bot},\label{5.2}
\end{equation}
and $\mathrm{dim}(H^{*}_QV) \leq \mathrm{dim}(V)< \infty$, we have by Corollary 3.3 of \cite{B. Harrach and V. Pohjola and M. Salo2} that $\alpha H^{*}_{B}H_{B} \leq_{\mathrm{fin}} \mathrm{Re}N$.
\par
Let now $B \not\subset Q$ and assume on the contrary $\alpha H^{*}_{B}H_{B}\leq_{\mathrm{fin}} \mathrm{Re}N$, that is, by Colrollary 3.3 of \cite{B. Harrach and V. Pohjola and M. Salo2} there exists a finite dimensional subspace $W$ in $L^{2}(M)$ such that
\begin{equation}
\langle (\mathrm{Re}N-\alpha H^{*}_{B}H_{B})w, w \rangle \geq 0,\label{5.3}
\end{equation}
for all $w \in W^{\bot}$. Since $B \not\subset Q$,  we can take a small open domain $B_0 \subset B$ such that $B_0 \cap Q= \emptyset$, which implies that for all $w \in W^{\bot}$ 
\begin{eqnarray}
\alpha \left\| H_{B_0}w \right\|^{2}
&\leq&
\alpha \left\| H_{B}w \right\|^{2}
\nonumber\\
&\leq&\langle (\mathrm{Re}N)w, w \rangle
\nonumber\\
&=&\langle (\mathrm{Re}\hat{T})H_Qw, H_Qw \rangle
\nonumber\\
&\leq&\left\| \mathrm{Re}\hat{T} \right\| \left\|H_Qw \right\|^{2}.
\label{5.4}
\end{eqnarray}
By (a) of Lemma 4.7 in \cite{B. Harrach and V. Pohjola and M. Salo2}, we have
\begin{equation}
\mathrm{Ran}(H^{*}_{B_0}) \not\subseteq \mathrm{Ran}(H_{Q}^{*})+W= \mathrm{Ran}(H_{Q}^{*},\ P_{W}),\label{5.5}
\end{equation}
where $P_{W}:L^{2}(M)\to L^{2}(M)$ is the orthogonal projection on $W$. Lemma 4.6 of \cite{B. Harrach and V. Pohjola and M. Salo2} implies that for any $C>0$ there exists a $w_c$ such that 
\begin{equation}
\left\|H_{B_0}w_{c} \right\|^2 >  C^{2}\left\| \left(
    \begin{array}{ccc}
      H_{Q} \\
      P_{V} \\
    \end{array}
\right) w_c \right\|^2=C^{2}(\left\|H_Qw_{c} \right\|^2+\left\|P_{W}w_{c} \right\|^2).\label{5.6}
\end{equation}
Hence, there exists a sequence $(w_m)_{m\in \mathbb{N}} \subset  L^{2}(\mathbb{S}^{1})$ such that $\left\|H_{B_0}w_{m} \right\| \to \infty$ and $\left\|H_Qw_{m} \right\|+\left\|P_{V}w_{m} \right\| \to 0$ as $m \to \infty$. Setting $\tilde{w}_{m}:=w_{m}-P_{W}w_{m} \in W^{\bot}$ we have as $m\to \infty$,
\begin{equation}
\left\|H_{B_0}\tilde{w}_{m} \right\| \geq \left\|H_{B_0}w_{m} \right\|-\left\|H_{B_0}\right\| \left\|P_{W}w_{m}\right\| \to \infty,\label{5.7}
\end{equation}
\begin{equation}
\left\|H_Q\tilde{w}_{m} \right\| \leq \left\|H_Qw_{m} \right\|+\left\| H_Q \right\| \left\|P_{W}w_{m} \right\| \to 0.\label{5.8}
\end{equation}
This contradicts (\ref{5.4}). Therefore, we have $\alpha H^{*}_{B}H_{B}\not\leq_{\mathrm{fin}} \mathrm{Re}N$. Theorem 1.1 has been shown. \qed
\vspace{4mm}
\par
By the same argument in Theorem 1.1 we can show the following.
\begin{cor}
Let $B \subset \mathbb{R}^2$ be a bounded open set. Let Assumption hold, and assume that there exists $q_{max}<0$ such that $q\leq q_{max}$ a.e. in $Q$. Then for $0<\alpha<k^2n_{min}|q_{max}|$, 
\begin{equation}
B \subset Q \ \ \ \  \Longleftrightarrow \ \ \ \  \alpha H^{*}_{B}H_{B}\leq_{\mathrm{fin}} -\mathrm{Re}N,\label{5.9}
\end{equation}
\end{cor}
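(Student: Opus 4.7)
The plan is to follow the structure of the proof of Theorem 1.1 essentially verbatim, with the only change being how the leading positive term in $\langle -\mathrm{Re}N g, g\rangle$ is produced when $q$ is negative. Recall from Section 4 the factorization $N = H_Q^{*}\hat{T}H_Q$ with $\hat{T}f = k^{2}nqf + k^{2}nq w$ where $w$ solves (\ref{4.28})--(\ref{4.29}). Defining $K:L^{2}(Q)\to L^{2}(Q)$ by $Kf := k^{2}nqw$ (which is compact by elliptic regularity, as in the Theorem 1.1 argument), we get
\begin{equation*}
\langle -\mathrm{Re}N g, g \rangle = -\int_{Q} k^{2}nq\,|H_{Q}g|^{2}\,dx \;-\;\langle (\mathrm{Re}K)H_{Q}g, H_{Q}g\rangle.
\end{equation*}
Since $q \le q_{max}<0$ and $n \ge n_{min}>0$, we have $-k^{2}nq = k^{2}n|q| \ge k^{2}n_{min}|q_{max}|$ a.e.\ in $Q$. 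This is the key sign change: where Theorem 1.1 used positivity of $nq$, here we use positivity of $-nq$.

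For the implication $B\subset Q \Rightarrow \alpha H_{B}^{*}H_{B}\le_{\mathrm{fin}} -\mathrm{Re}N$, I let $V$ be the sum of eigenspaces of the compact self-adjoint operator $-\mathrm{Re}K$ with eigenvalues strictly less than $\alpha - k^{2}n_{min}|q_{max}|$. Because $\alpha < k^{2}n_{min}|q_{max}|$ this threshold is negative, so $V$ is finite dimensional. For $g\in L^{2}(M)$ with $H_{Q}g \in V^{\perp}$,
\begin{equation*}
\langle -\mathrm{Re}N g, g\rangle \ge k^{2}n_{min}|q_{max}|\,\|H_{Q}g\|^{2} + (\alpha - k^{2}n_{min}|q_{max}|)\|H_{Q}g\|^{2} = \alpha\|H_{Q}g\|^{2} \ge \alpha\|H_{B}g\|^{2},
\end{equation*}
where the last inequality uses $B\subset Q$. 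The equivalence $H_{Q}g \in V^{\perp} \Leftrightarrow g \in (H_{Q}^{*}V)^{\perp}$ together with $\dim(H_{Q}^{*}V)<\infty$ and Corollary 3.3 of \cite{B. Harrach and V. Pohjola and M. Salo2} then yields $\alpha H_{B}^{*}H_{B}\le_{\mathrm{fin}} -\mathrm{Re}N$.

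For the converse direction, I argue by contradiction exactly as in Theorem 1.1. Assume $B\not\subset Q$ and $\alpha H_{B}^{*}H_{B}\le_{\mathrm{fin}} -\mathrm{Re}N$, so there is a finite-dimensional $W\subset L^{2}(M)$ with $\langle(-\mathrm{Re}N-\alpha H_{B}^{*}H_{B})w,w\rangle \ge 0$ for all $w\in W^{\perp}$. Pick an open $B_{0}\subset B$ with $B_{0}\cap Q=\emptyset$; then for $w\in W^{\perp}$,
\begin{equation*}
\alpha \|H_{B_{0}}w\|^{2} \le \alpha\|H_{B}w\|^{2} \le \langle -\mathrm{Re}N\,w,w\rangle = \langle -\mathrm{Re}\hat{T}\,H_{Q}w, H_{Q}w\rangle \le \|\mathrm{Re}\hat{T}\|\,\|H_{Q}w\|^{2}.
\end{equation*}
Applying (b) of Lemma 4.4 with $B_{0}\cap Q=\emptyset$ and the abstract machinery (Lemmas 4.6--4.7 of \cite{B. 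Harrach and V. Pohjola and M. Salo2}) produces a sequence $(\tilde w_{m})\subset W^{\perp}$ with $\|H_{B_{0}}\tilde w_{m}\|\to\infty$ and $\|H_{Q}\tilde w_{m}\|\to 0$, contradicting the displayed inequality. I do not expect any genuinely new obstacle: the only substantive change is replacing $k^{2}n_{min}q_{min}$ by $k^{2}n_{min}|q_{max}|$ and inserting the sign flip, so the main care is purely bookkeeping of signs in the coercivity estimate.
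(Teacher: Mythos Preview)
Your proposal is correct and follows exactly the approach the paper intends: the paper simply states that Corollary 5.1 follows ``by the same argument in Theorem 1.1,'' and your write-up carries out precisely that argument with the obvious sign flip (using $-k^{2}nq \ge k^{2}n_{min}|q_{max}|$ and the eigenspaces of $-\mathrm{Re}K$ in place of $\mathrm{Re}K$). There is nothing to add.
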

\section{Proof of Theorem 1.2}
In Section 6, we will show Theorem 1.2. Let $Q \subset B$. Let $V$ be the sum of eigenspaces of $\mathrm{Re}K$ associated to eigenvalues more than $\alpha-k^2n_{max}q_{max}$. Since $\alpha-k^2n_{max}q_{max}>0$, then $V$ is a finite dimensional and for $H_Qg\in V^{\bot}$
\begin{eqnarray}
\langle \mathrm{Re}Ng, g \rangle&=&\int_{Q}k^2nq|H_Qg|^2
dx+\langle (\mathrm{Re}K)H_Qg, H_Qg \rangle\nonumber\\
&\leq&
k^n_{max}q_{max}\left\| H_Qg \right\|^{2}+(\alpha-k^2n_{max}q_{max})\left\| H_Qg \right\|^{2}
\nonumber\\
&\leq&\alpha\left\| H_Qg \right\|^{2}\leq \alpha\left\| H_Bg \right\|^{2}.
\label{6.1}
\end{eqnarray}
Since for $g \in L^{2}(M)$
\begin{equation}
H_Qg \in V^{\bot} \ \ \ \  \Longleftrightarrow \ \ \ \ g \in (H^{*}_QV)^{\bot},\label{6.2}
\end{equation}
and $\mathrm{dim}(H^{*}_QV) \leq \mathrm{dim}(V)< \infty$, we have by Corollary 3.3 of \cite{B. Harrach and V. Pohjola and M. Salo2} that $\mathrm{Re}N \leq_{\mathrm{fin}} \alpha H^{*}_{B}H_{B}$.
\par
Let now $Q \not\subset B$ and assume on the contrary $\mathrm{Re}N \leq_{\mathrm{fin}}\alpha H^{*}_{B}H_{B}$, that is, by Corollary 3.3 of \cite{B. Harrach and V. Pohjola and M. Salo2} there exists a finite dimensional subspace $W$ in $L^{2}(M)$ such that
\begin{equation}
\langle (\alpha H^{*}_{B}H_{B}-\mathrm{Re}N)w, w \rangle \geq 0,\label{6.3}
\end{equation}
for all $w \in W^{\bot}$. Since $Q \not\subset B$,  we can take a small open domain $Q_0 \subset Q$ such that $Q_0 \cap B= \emptyset$. Let $V$ be the sum of eigenspaces of $\mathrm{Re}K$ associated to eigenvalues less than $-k^2n_{min}q_{min}/2$. Then, $V$ is a finite dimensional and for $g \in (H^{*}_QV)^{\bot} \cap W^{\bot}=(H^{*}_QV \cup W)^{\bot}$ we have
\begin{eqnarray}
&&\alpha \left\| H_{B}g \right\|^{2}
\nonumber\\
&\geq&
\langle (\mathrm{Re}N)g, g \rangle
\nonumber\\
&=&\int_{Q}k^2nq|H_Qg|^2dx + \langle (\mathrm{Re}\hat{K})H_Qg, H_Qg \rangle
\nonumber\\
&\geq&k^2n_{min}q_{min}\left\| H_Qg \right\|^2 -k^2n_{min}q_{min}/2\left\|H_Qg \right\|^{2}=k^2n_{min}q_{min}/2\left\|H_Qg \right\|^{2}
\nonumber\\
&\geq&k^2n_{min}q_{min}/2\left\|H_{Q_0}g \right\|^{2},
\label{6.4}
\end{eqnarray}
and $\mathrm{dim}(H^{*}_QV \cup W) < \infty$. By the same argument in Theorem 1.1, there exists a sequence $(g_m)_{m\in \mathbb{N}} \subset  (H^{*}_QV \cup W)^{\bot}$ such that $\left\|H_{Q_0}w_{m} \right\| \to \infty$ and $\left\|H_{B}g_{m} \right\| \to 0$ as $m \to \infty$, which contradicts (\ref{6.4}). Therefore, we have $\mathrm{Re}N \not\leq_{\mathrm{fin}} \alpha H^{*}_{B}H_{B}$. Theorem 1.2 has been shown. \qed
\vspace{4mm}
\par
By the same argument in Theorem 1.2 we can show the following.
\begin{cor}
Let $B \subset \mathbb{R}^2$ be a bounded open set. Let Assumption hold, and assume that there exists $q_{min}<0$ and $q_{max}<0$ such that $q_{min} \leq q\leq q_{max}$ a.e. in $Q$. Then for $\alpha>k^2n_{max}|q_{min}|$, 
\begin{equation}
Q \subset B \ \ \ \  \Longleftrightarrow \ \ \ \ -\mathrm{Re}N \leq_{\mathrm{fin}} \alpha H^{*}_{B}H_{B},\label{6.5}
\end{equation}
\end{cor}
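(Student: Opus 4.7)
The plan is to mirror the proof of Theorem 1.2 almost verbatim, keeping track of the sign changes that arise because now $q<0$ a.e.\ on $Q$. As before, the backbone is the factorization $N=H_Q^{*}\hat{T}H_Q$ from (\ref{4.26}) and the decomposition $\hat{T}f=k^{2}nqf+Kf$ with $Kf=k^{2}nq\,w$ compact (since $w\big|_Q\in H^{1}(Q)$). The only new ingredient is the elementary bound that, when $q_{\min}\leq q\leq q_{\max}<0$, one has $n_{\max}q_{\min}\leq nq\leq n_{\min}q_{\max}<0$, so that $|nq|\in[n_{\min}|q_{\max}|,\,n_{\max}|q_{\min}|]$.

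For the direction $Q\subset B\Rightarrow -\mathrm{Re}\,N\leq_{\mathrm{fin}}\alpha H_B^{*}H_B$, I would compute
\begin{equation}
\langle -\mathrm{Re}\,N g,g\rangle=-\int_Q k^{2}nq\,|H_Q g|^{2}\,dx+\langle -\mathrm{Re}\,K\,H_Q g,H_Q g\rangle,
\end{equation}
bound the first term from above by $k^{2}n_{\max}|q_{\min}|\,\|H_Q g\|^{2}$, and absorb the compact piece by letting $V$ be the (finite-dimensional, since $\alpha-k^{2}n_{\max}|q_{\min}|>0$ and $K$ is compact) sum of eigenspaces of $-\mathrm{Re}\,K$ with eigenvalues strictly greater than $\alpha-k^{2}n_{\max}|q_{\min}|$. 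For $H_Q g\in V^{\perp}$ the sum is $\leq\alpha\|H_Q g\|^{2}\leq\alpha\|H_B g\|^{2}$ by $Q\subset B$, and the equivalence $H_Q g\in V^{\perp}\Leftrightarrow g\in(H_Q^{*}V)^{\perp}$ together with $\dim H_Q^{*}V<\infty$ lets me invoke Corollary 3.3 of \cite{B. Harrach and V. Pohjola and M. Salo2} as in Theorem 1.2.

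For the converse, I argue by contradiction: assume $Q\not\subset B$ and $-\mathrm{Re}\,N\leq_{\mathrm{fin}}\alpha H_B^{*}H_B$, so that on some finite-codimensional subspace $W^{\perp}$ one has $\langle -\mathrm{Re}\,N g,g\rangle\leq\alpha\|H_B g\|^{2}$. Pick a small open $Q_0\subset Q$ with $Q_0\cap B=\emptyset$. Now I need a matching lower bound, which comes from the other side of the inequality $|nq|\geq n_{\min}|q_{\max}|$:
\begin{equation}
\langle -\mathrm{Re}\,N g,g\rangle\geq k^{2}n_{\min}|q_{\max}|\,\|H_Q g\|^{2}+\langle -\mathrm{Re}\,K\,H_Q g,H_Q g\rangle.
\end{equation}
Letting $V$ be the sum of eigenspaces of $-\mathrm{Re}\,K$ with eigenvalues $<-\tfrac{1}{2}k^{2}n_{\min}|q_{\max}|$ (finite-dimensional by compactness), on $(H_Q^{*}V)^{\perp}\cap W^{\perp}$ I get $\langle -\mathrm{Re}\,N g,g\rangle\geq\tfrac{1}{2}k^{2}n_{\min}|q_{\max}|\,\|H_{Q_0}g\|^{2}$, so
\begin{equation}
\tfrac{1}{2}k^{2}n_{\min}|q_{\max}|\,\|H_{Q_0}g\|^{2}\leq\alpha\|H_B g\|^{2}\quad\text{for all }g\in(H_Q^{*}V\cup W)^{\perp}.
\end{equation}
Then Lemmas 4.6 and 4.7 of \cite{B. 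Harrach and V. Pohjola and M. Salo2}, combined with (b) of Lemma 4.4 (which gives $\mathrm{Ran}(H_{Q_0}^{*})\not\subset\mathrm{Ran}(H_B^{*})+\text{finite-dim}$ since $Q_0\cap B=\emptyset$), produce a sequence $(g_m)\subset(H_Q^{*}V\cup W)^{\perp}$ with $\|H_{Q_0}g_m\|\to\infty$ and $\|H_B g_m\|\to 0$, contradicting the displayed inequality.

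The conceptual content is entirely identical to Theorem 1.2; the only delicate point is not flipping a sign by mistake in the estimate of $\int_Q k^{2}nq|H_Q g|^{2}$, so I would double-check that the negativity of $q$ converts the upper bound $k^{2}n_{\max}q_{\max}$ used for $+\mathrm{Re}\,N$ into the upper bound $k^{2}n_{\max}|q_{\min}|$ on $-\mathrm{Re}\,N$, which is exactly the hypothesis $\alpha>k^{2}n_{\max}|q_{\min}|$ in the statement.
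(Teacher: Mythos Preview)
Your proposal is correct and follows exactly the approach the paper intends: the paper itself offers no separate proof for this corollary beyond the line ``By the same argument in Theorem 1.2 we can show the following,'' and your write-up is precisely that argument with the sign of $q$ reversed, using the bounds $n_{\min}|q_{\max}|\leq -nq\leq n_{\max}|q_{\min}|$ in place of $n_{\min}q_{\min}\leq nq\leq n_{\max}q_{\max}$.
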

\section{Numerical examples}
In Section 7, we discuss the numerical examples based on Theorem 1.1. We consider the following two supports $Q_1$ and $Q_2$ of functions $q_1$, $q_2$ (see Figure 1):
\begin{description}
  \item[(1)] $Q_1=\left\{(x_1, x_2) | (x_1-0.5)^2 + (x_2-0.5)^2 < (0.2)^2 \right\}$
  \item[(2)] $Q_2=\left\{(x_1, x_2) | \left((x_1-0.5)/0.15 \right)^2 + \left((x_2-0.6)/0.3 \right)^2 < 1 \right\}$
\end{description}
where $q_1$ and $q_2$ are defined by
\begin{equation}
q_j(x):=\left\{ \begin{array}{ll}
1 & \quad \mbox{for $x \in Q_j $}  \\
0 & \quad \mbox{for $x \notin Q_j$}
\end{array} \right.
\end{equation}

\begin{figure}[htbp]
\begin{tabular}{c}
\begin{minipage}{0.45\hsize}
  \begin{center}
   \includegraphics[scale=0.525]{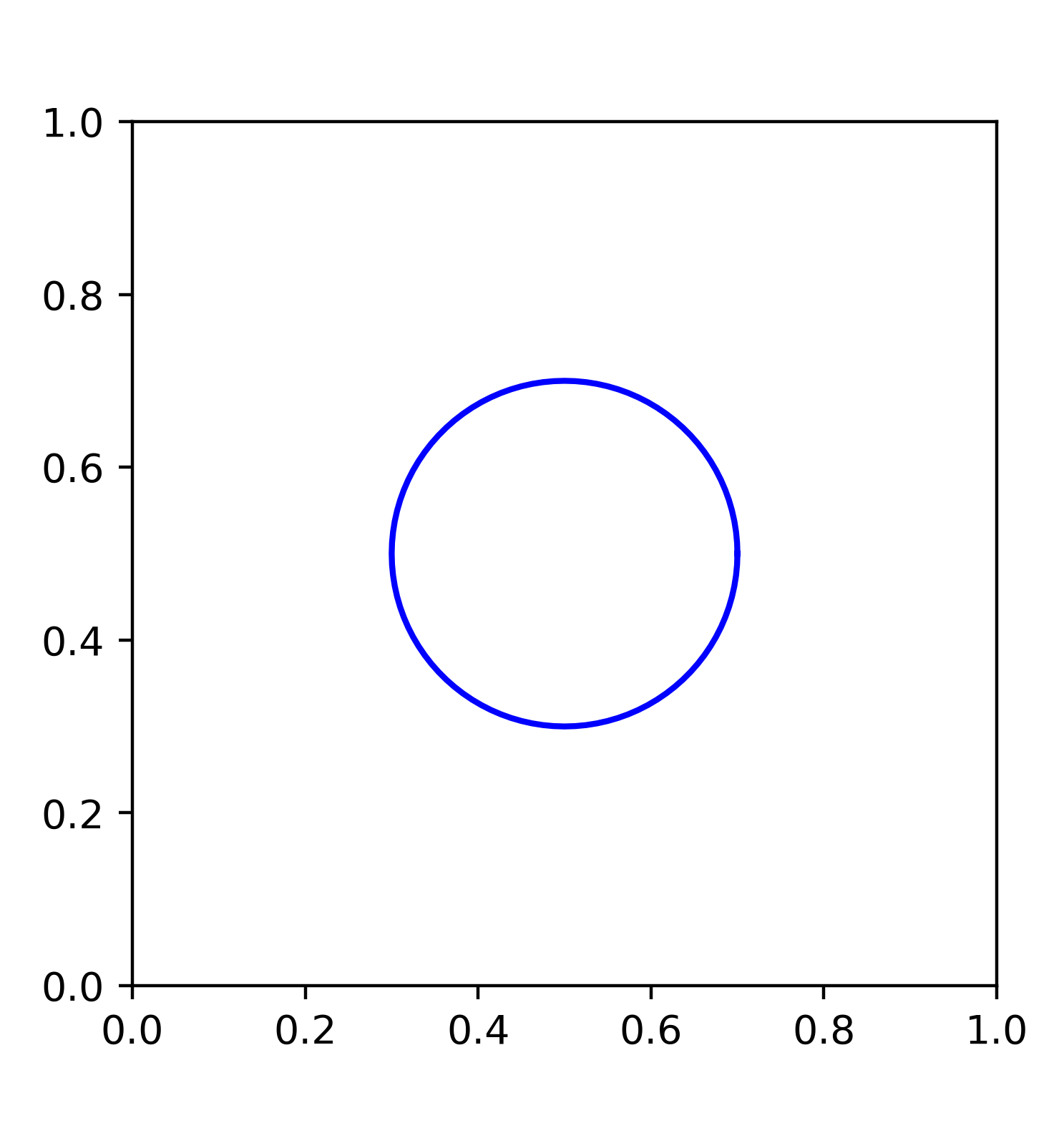}
  \end{center}
  \vspace{-5mm}
  \subcaption*{$Q_1$}
 \end{minipage}
 \begin{minipage}{0.45\hsize}
 \begin{center}
  \includegraphics[scale=0.525]{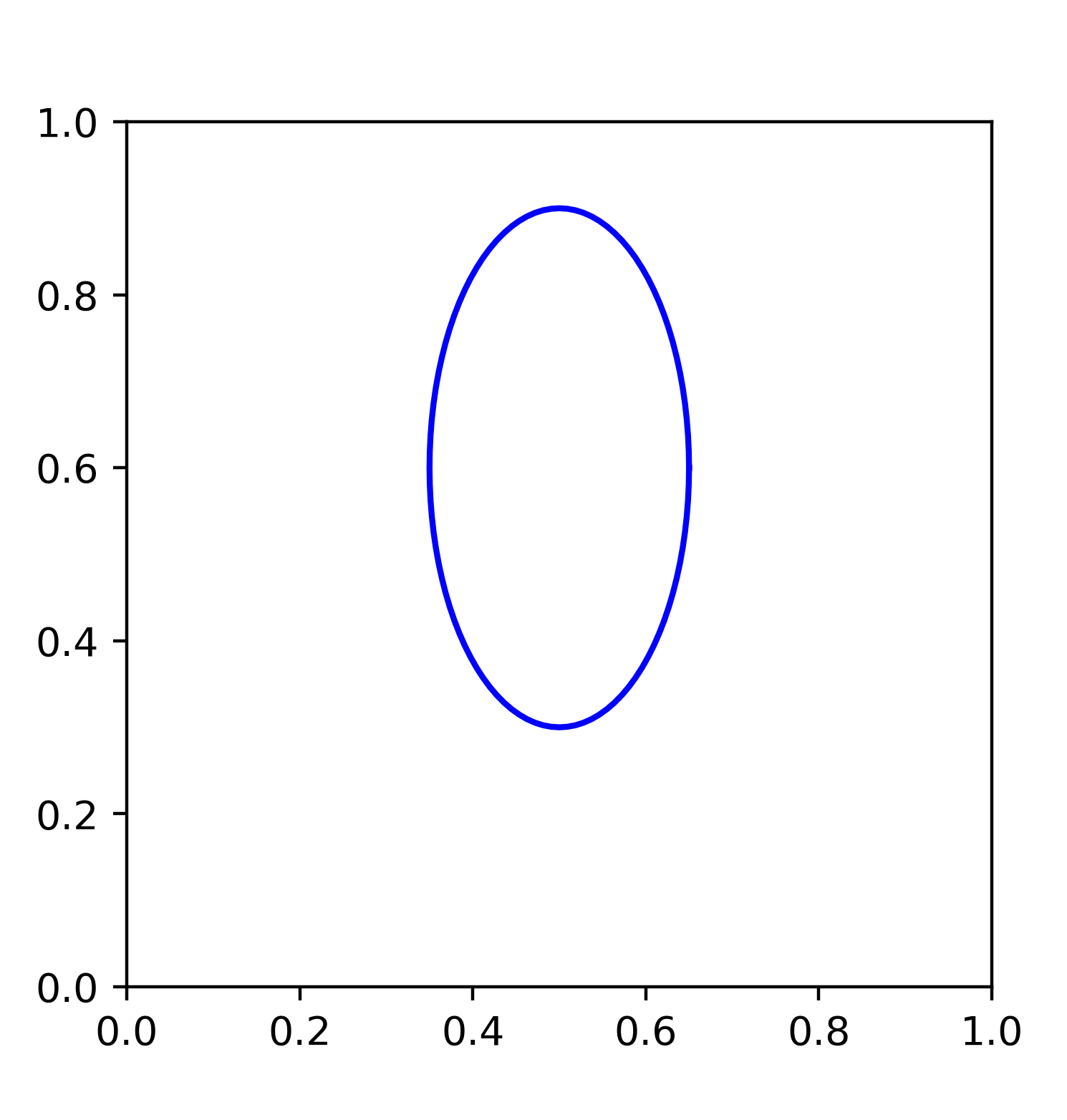}
 \end{center}
 \vspace{-5mm}
  \subcaption*{$Q_2$}
 \end{minipage}
\end{tabular}
\caption{The original shape}
\end{figure}
\vspace{5mm}

Based on Theorem 1.1, the indicator function in our examples is given by
\begin{equation}
I(B):= \# \left\{\mathrm{negative} \ \mathrm{eigenvalues}  \ \mathrm{of} \ \mathrm{Re}N-\alpha H^{*}_{B}H_{B} \right\}
\end{equation}
The idea to recover $Q_j$ is to plot the value of $I(B)$ for many of small $B$ in the certain sampling region. Then, we expect from Theorem 1.1 that the value of the function $I(\sigma)$ is low if $B$ is  included in $Q_j$.
\par
We consider the sampling region by $[0,R]\times[0,R]$ with some $R>0$. The test domain $B$ is given by the small square $B_{i,j}:=z_{i,j}+[-R/2M,R/2M]^2$ where the location $z_{i,j}=(Ri/M, Rj/M)$ $(i,j=1,...,M)$ and $M$ is some large number.
\par
The near field operator $N$ is discretized by the matrix
\begin{equation}
N \approx \frac{b-a}{d} \bigl(u^{s}(x_l, x_p) \bigr)_{1 \leq l,p \leq d} \in \mathbb{C}^{d \times d}
\end{equation}
where $\hat{x}_l=\bigl(a+\frac{(b-a)l}{d}, m \bigr)$, and the scattered field $u^{s}$ of the problem (\ref{1.1})--(\ref{1.2}) is a solution of the following integral equation
\begin{equation}
u^s(x,z)=k^2\int_{Q} q(y)n(y)u^s(y,z)G_n(x,y)dy+k^2\int_{Q} q(y)n(y)\overline{G_n(y,z)}G_n(x,y)dy. \label{7.4}
\end{equation}
In our examples we fix $R=1$, $M=100$, $d=30$, $a=-25$, $b=25$, $m=20$, and $n\equiv 1$. Figure 2 is given by plotting the values of the indicator function 
\begin{equation}
I_{square}(z_{i,j}):=I(B_{i,j}), \ i, j = 1,..., 100,
\end{equation}
for two different supports $Q_1$ and $Q_2$ of true functions $q_1$ and $q_2$, and for two different parameters $\alpha=10, 20$ in the case of wavenumber $k=5$.

\begin{figure}[htbp]
\hspace{-0.5cm}
\begin{tabular}{c}
\begin{minipage}{0.5\hsize}
  \begin{center}
   \includegraphics[scale=0.55]{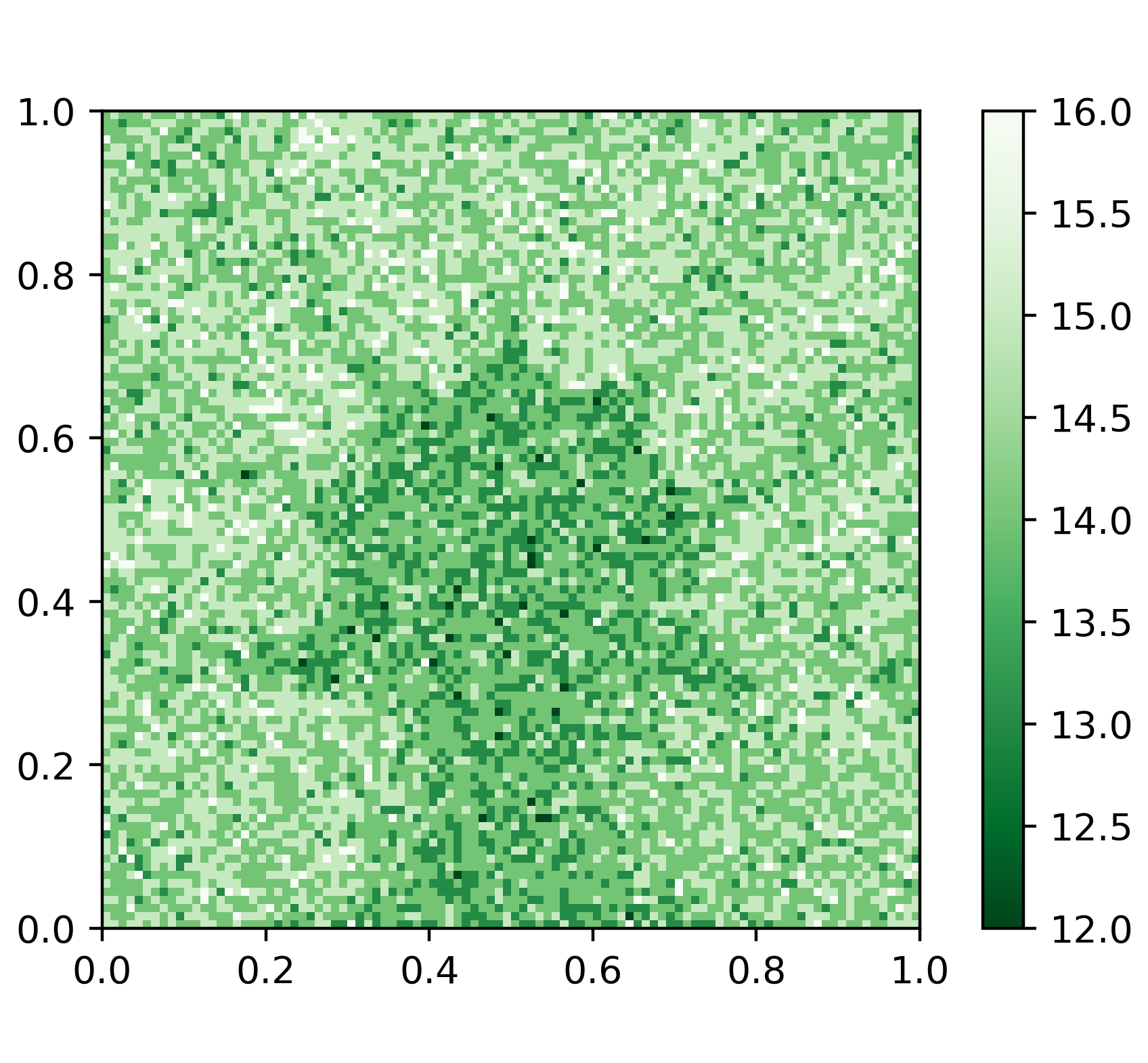}
  \end{center}
  \vspace{-5mm}
  \subcaption*{$Q_1$, $\alpha=10$}
 \end{minipage}
 \begin{minipage}{0.5\hsize}
 \begin{center}
  \includegraphics[scale=0.55]{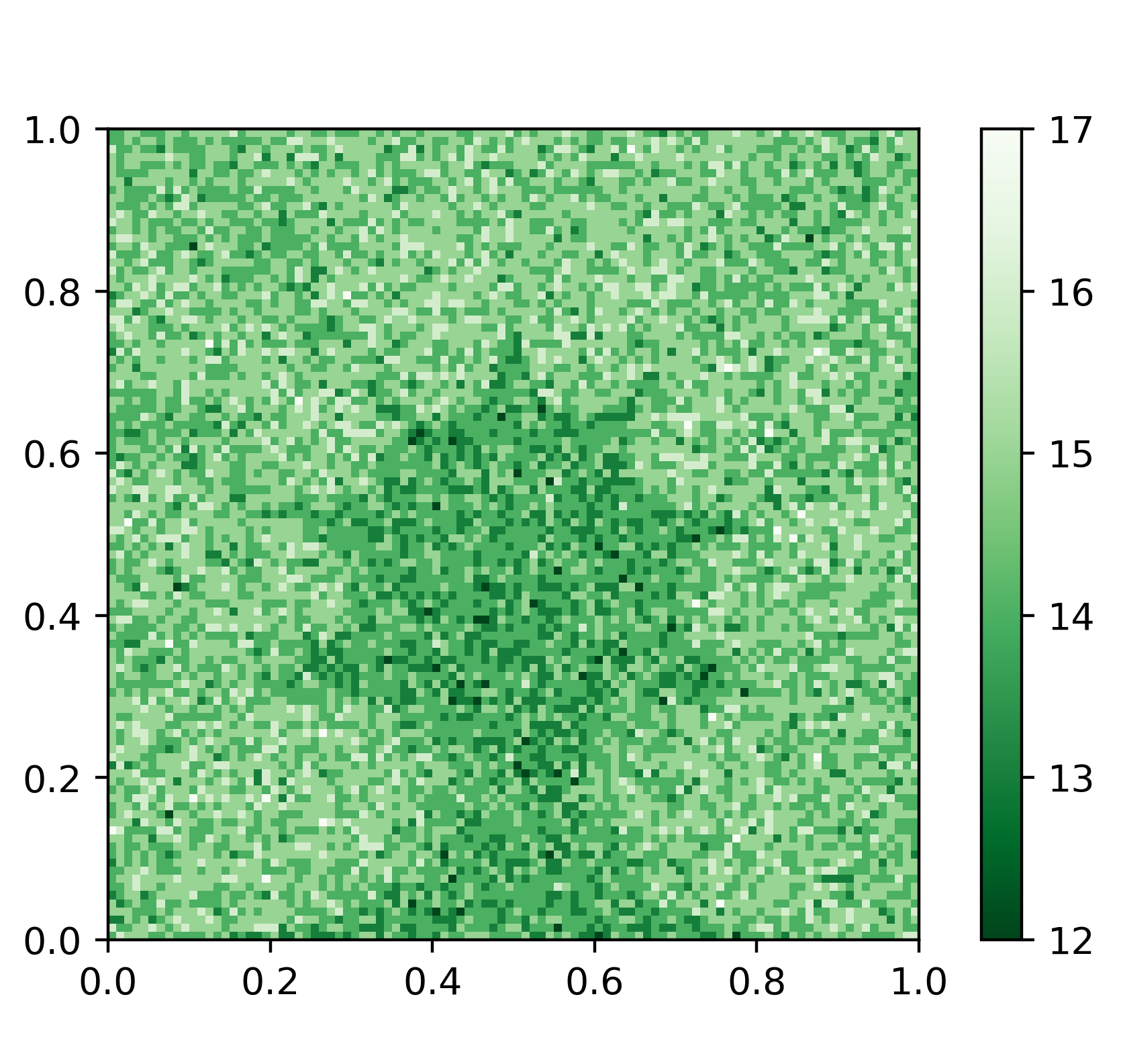}
 \end{center}
  \vspace{-5mm}
  \subcaption*{$Q_1$, $\alpha=20$}
 \end{minipage}
\hspace{2cm}
\end{tabular}

\hspace{-0.5cm}
\begin{tabular}{c}
\begin{minipage}{0.5\hsize}
  \begin{center}
   \includegraphics[scale=0.55]{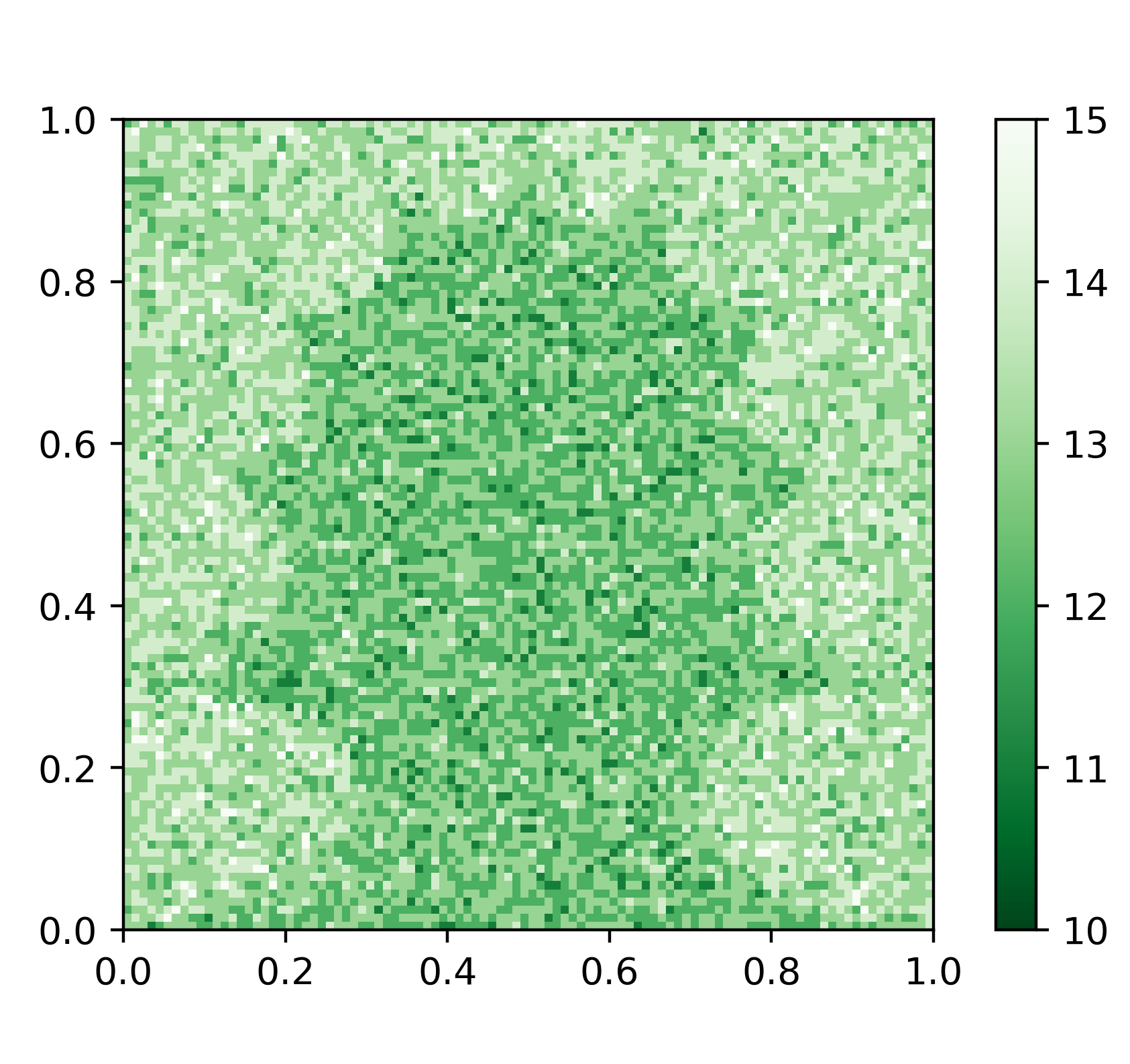}
  \end{center}
  \vspace{-5mm}
  \subcaption*{$Q_2$, $\alpha=10$}
 \end{minipage}
 \begin{minipage}{0.5\hsize}
 \begin{center}
  \includegraphics[scale=0.55]{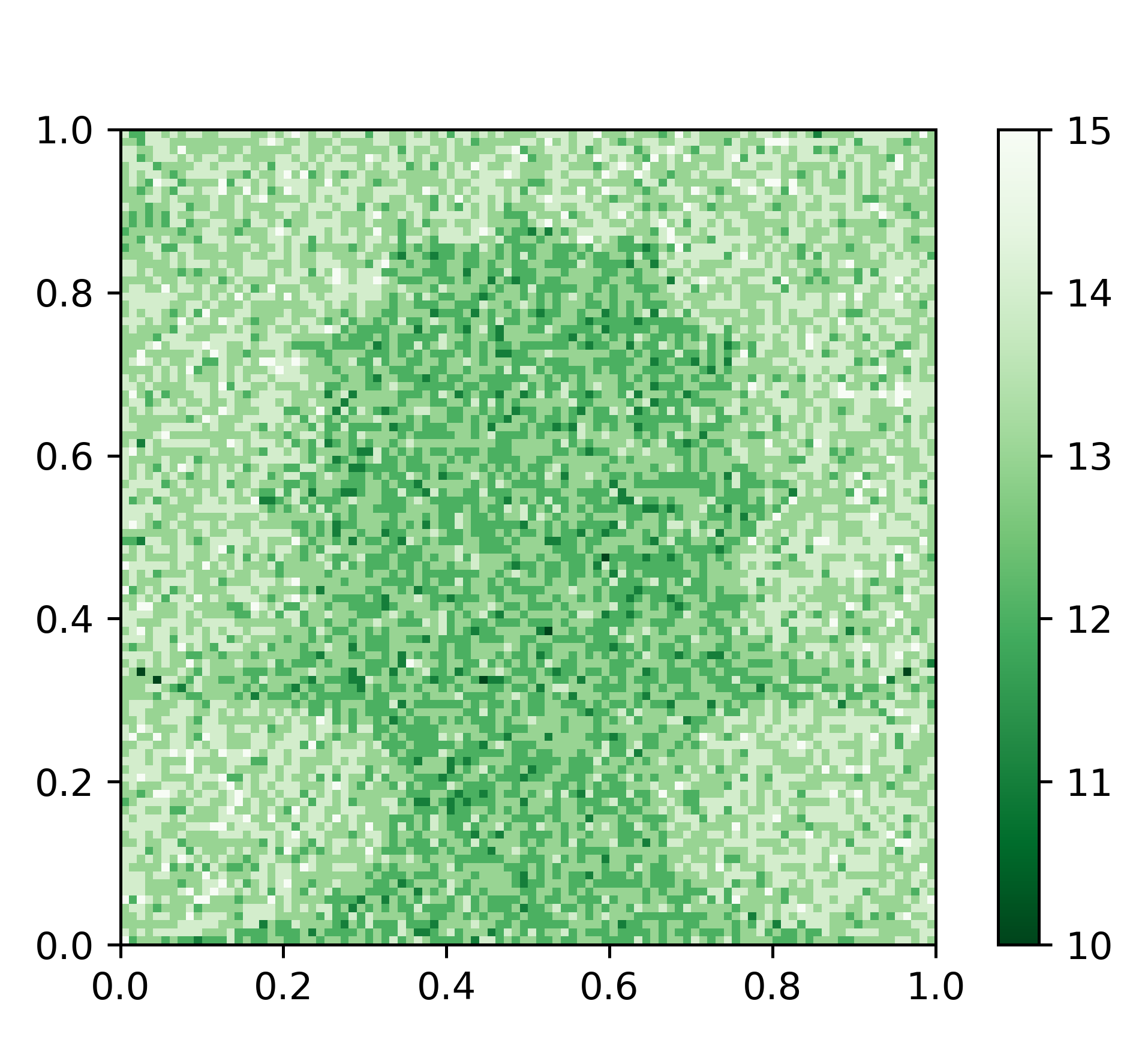}
 \end{center}
  \vspace{-5mm}
  \subcaption*{$Q_2$, $\alpha=20$}
 \end{minipage}
\end{tabular}
\caption{Reconstruction by the indicator function $I_{square}$ in the case of wavenumber $k=5$}
\end{figure}
\section*{Acknowledgments}
The author thanks to Professor Andreas Kirsch, who supports him in this study.


Graduate School of Mathematics, Nagoya University, Furocho, Chikusa-ku, Nagoya, 464-8602, Japan \par
e-mail: takashi.furuya0101@gmail.com

\end{document}